\def\least{{}_\downarrow}
\def\til#1{{\widetilde #1}}
\def\tilQ{{\widetilde Q}}
\def\val{{\rm val\,}}
\def\Exp{{\rm Exp}}
\def\lam{\lambda}
\def\alp{\alpha}
\def\eqbd{{{:}{=}}}
\def\eps{\epsilon}
\def\bks{\backslash}
\def\bfF{{\bf F}}
\def\setminus{\bks}
\newtheorem{theorem}{Theorem}[section]
\newtheorem{proposition}[theorem]{Proposition}
\newtheorem{lemma}[theorem]{Lemma}
\newtheorem{definition}[theorem]{Definition}
\newtheorem{example}[theorem]{Example}
\newenvironment{@abssec}[1]{%
     \if@twocolumn
       \section*{#1}%
     \else
       \vspace{.05in}\footnotesize
       \parindent .2in
         {\bfseries #1. }\ignorespaces
     \fi}
     {\if@twocolumn\else\par\vspace{.1in}\fi}
\newenvironment{@normsec}[1]{%
     \if@twocolumn
       \section*{#1}%
     \else
       \vspace{.05in}
       \parindent .2in
         {\bfseries #1. }
     \fi}
     {\if@twocolumn\else\par\vspace{.1in}\fi}
\newenvironment{keywords}{\begin{@abssec}{Key words}}{\end{@abssec}}
\newenvironment{AMS}{\begin{@abssec}{AMS subject classification}}{\end{@abssec}}
\newenvironment{proof}{\begin{@normsec}{Proof}}{\hfill $\Box$ \end{@normsec}\medskip }
\newtheorem{remark}[theorem]{Remark}
\newcommand{\R}{\mathbb{R}}
\def\Rn{\R^n}
\def\Zn{\Z^n}
\newcommand{\B}{\mathbb{B}}
\newcommand{\C}{\mathbb{C}}
\newcommand{\Z}{\mathbb{Z}}
\newcommand{\N}{\mathbb{N}}
\newcommand{\Q}{\mathbb{Q}}
\newcommand{\CV}{\mathcal{V}}
\newcommand{\CP}{\mathcal{P}}
\newcommand{\cp}[1]{\mathcal{P}(#1)}
\newcommand{\cpm}[1]{\mathcal{P}_-(#1)}
\newcommand{\cbm}[1]{\B_-(#1)}
\newcommand{\cjm}[1]{\mathcal{J}_-(#1)}
\newcommand{\cim}[1]{\mathcal{I}_-(#1)}
\newcommand{\cdm}[1]{\mathcal{D}_-(#1)}
\newcommand{\CI}{\mathcal{I}}
\newcommand{\CF}{\mathcal{F}}
\newcommand{\CH}{\mathcal{H}}
\newcommand{\CJ}{\mathcal{J}}
\newcommand{\CD}{\mathcal{D}}
\newcommand{\CM}{\mathcal{M}}
\newcommand{\Cb}{\mathcal{V}}
\newcommand{\CZ}{\mathcal{Z}}
\newcommand{\BC}{{\rm Comp}}
\def\Ideal{\mathop{\rm Ideal}\nolimits}
\def\rank{\mathop{\rm rank}\nolimits}
\def\I{\mathbb{I}}
\def\ex{\mathop{\rm ex}}
\def\Sym{\mathop{\rm Sym}}
\def\spam{{\rm span\,}}
\def\codim{{\rm codim\,}}
\def\inpro#1{\langle#1\rangle}
\begin{document}

%%%%%%%%%%%%%%%%%%%%%%%%%%%%%    TITLE   PAGE    %%%%%%%%%%%%%%%%%%%%%%%%%%%%%%
\title{Hierarchical zonotopal spaces}

\author{Olga Holtz\thanks{School of Mathematics, Institute for Advanced Study;
 Departments of Mathematics, University of California-Berkeley and Technische Universit\"at Berlin. The work of
this author was supported by the Sofja Kovalevskaja Research Prize of Alexander von
Humboldt Foundation and by the National Science Foundation under agreement DMS-0635607.
Email: {\tt holtz@math.ias.edu}}
\and
Amos Ron \thanks{Departments of Mathematics and Computer Sciences, University of
 Wisconsin-Madison. The work of this author was supported by the National
 Science Foundation under grants DMS-0602837 and DMS-0914986, and by
 the National Institute of General Medical Sciences under Grant
 NIH-1-R01-GM072000-01. Email: {\tt amos@cs.wisc.edu}}
\and  Zhiqiang Xu
\thanks{LSEC, Academy of Mathematics and Systems Sciences,
 Chinese Academy of Sciences, Beijing, 100190, China. The work of this
author was supported in part by NSFC grant 10871196 and was performed
in part at Technische Universit\"at Berlin. Email: {\tt xuzq@lsec.cc.ac.cn}} }

\date{\small February 10, 2010}

\maketitle

\begin{abstract}  Zonotopal algebra interweaves algebraic, geometric and
combinatorial properties of a given linear map $X$. Of basic significance in
this theory is the fact that the algebraic structures are derived from the
geometry (via a non-linear procedure known as ``the least map''), and that
the statistics of the algebraic structures (e.g., the Hilbert series of
various polynomial ideals) are combinatorial, i.e., computable using a
simple discrete algorithm known as ``the valuation function''. On the other
hand, the theory is somewhat rigid since it deals, for the given $X$, with
exactly two pairs each of which is made of a nested sequence of three ideals:
an external ideal (the smallest), a central ideal (the middle), and an
internal ideal (the largest).

In this paper we show that the fundamental principles of zonotopal
algebra as described in the previous paragraph extend far beyond
the setup of external, central and internal ideals by building a
whole hierarchy of new combinatorially defined zonotopal spaces.
\end{abstract}

\begin{AMS}  13F20, 13A02, 16W50, 16W60, 47F05, 47L20, 05B20, 05B35, 05B45, 05C50,
52B05, 52B12, 52B20, 52C07, 52C35, 41A15, 41A63.
\end{AMS}

\begin{keywords} Zonotopal algebra, multivariate polynomials, polynomial ideals,
duality, grading,  Hilbert series, kernels of differential operators, polynomial
interpolation,  box splines, zonotopes, hyperplane arrangements, matroids.
\end{keywords}

%%%%%%%%%%%%%%%%%%%%%%%%%%%%%%%    INTRODUCTION    %%%%%%%%%%%%%%%%%%%%%%%%%%
\section{\label{sec:intro}Introduction}
%%%%%%%%%%%%%%%%%%%%%%%%%%%%%%%    MOTIVATION   %%%%%%%%%%%%%%%%%%%%%%%%%%%%%
\subsection{\label{sec:motiv}Motivation}
%%%%%%%%%%%%%%%%%%%%%%%%%%%%%%%%%%%%%%%%%%%%%%%%%%%%%%%%%%%%%%%%%%%%%%%%%%%%%

We are interested in this article in the study of algebraic structures,
most notably in terms of homogeneous zero-dimensional polynomial ideals,
over hyperplane arrangements, and, by duality, over zonotopes. We start
by describing the pertinent setup.

Let $X$ be a $n\times N$ matrix of full rank $n(\le N)$ all whose columns
are nonzero, which can be also viewed as a multiset of its columns. 
The theory presented in this paper is invariant under the order of the 
columns. In fact, some of the algorithms below will require us to order 
the columns,  but will produce output that is independent of the order. 
The {\bf zonotope} $Z(X)$ associated with $X$ is the polytope
  $$
  Z(X):=\{\sum_{x\in X} t_xx:\,\, t\in [0,1]^N\}.
  $$
In other words, the zonotope $Z(X)$ is the image of the unit cube
$[0,1]^N$ under $X$ viewed as a linear map from $\R^N$ to $\R^n$
or, equivalently, $Z(X)$ is a Minkowski sum of segments $[0,x]$
over all column vectors $x\in X$.

The theory of {\bf Zonotopal Algebra} (see \cite{HR}) is built
around three pairs of zero-dimensional homogeneous polynomial ideals
that are associated with $X$: an external pair $({\CI}_+(X),$ $
{\CJ}_+(X))$, a central pair $({\CI}(X), \CJ(X))$, and an internal
pair $(\CI_-(X), \CJ_-(X))$. These polynomial ideals play a role in
several different areas of mathematics. In  Approximation Theory,
these ideals provide important information about multivariate splines
on regular grids (box splines, see \cite{BHR93}). In Algebra, these ideals
appear, for example,  in the context of group representations and
also in the context of particular types of orthogonal polynomials.
In Combinatorics, these ideals are pertinent to algebraic graph
theory, and are intimately connected with the Tutte polynomial. The
list goes on, with the most direct connection being  particular topics
within convex geometry such as zonotopes, zonotope tilings, lattice
points in zonotopes, and various aspects of hyperplane arrangements.
For additional connections, see \cite{AP,HRcomb,SX,B09}.

A centerpiece in the theory of Zonotopal Algebra are the formul\ae\ that
capture the codimensions of the above-mentioned ideals, and more generally,
their Hilbert series in terms of the basic statistics of an associated
matroid. The other pillar of the theory is the connection between the
aforementioned ideals and various relevant geometric structures, viz., 
the zonotope $Z(X)$, and the hyperplane arrangement $\CH(X,\lam)$.

We pause in order to illustrate some of these aspects.
First, we let
 $$
 \Pi\,:=\,\C[t_1,\ldots,t_n]
 $$
be the space of polynomials in $n$ variables, and let
 $$
 \Pi_k^0
 $$
be the subspace of $\Pi$ that contains all
homogeneous polynomials of exact degree $k$.
Also, for any homogeneous ideal $I\subset \Pi$, we denote
$$
\ker I:=\{q\in \Pi : \;\; p(D)q=0 \;\;\; \hbox{\rm for all }\;  p\in
I\}.
$$
Here, $p(D)$ is the counterpart of $q\in \Pi$ in the ring
$\C[\partial/\partial t_1,\ldots,\partial/\partial t_n].$ Given a
zero-dimensional polynomial ideal $I\subset\Pi$, we denote by
$$\codim I$$
the dimension of the quotient space
$$\Pi/I,$$
which (is always finite and) is equal to $\dim \ker I$.

We next associate the matrix $X$ with a suitable {\bf hyperplane
arrangement} $\CH$: we consider each column $x\in X$ as a linear
functional $p_x$ in $(\Rn)^\ast$ (using the standard inner product
in $\Rn$), and denote by $H_{x,\lam_x}$ the zero set of the affine
polynomial
$$\Rn\ni t\mapsto p_x(t)-\lam_x,$$
with $\lam_x\in\R$ (arbitrary but fixed). The hyperplane
arrangement $\CH(X,\lam)$ is the union of the hyperplanes
$H_{x,\lam_x}$, $x\in X$. We assume that $\CH(X,\lam)$ is {\bf
simple } which means that every subcollection of $m$ hyperplanes has
either an empty intersection or an intersection of codimension $m$.
Note that, for any given $X$, all vectors $\lam\in\R^X$ for which
$\CH(X,\lam)$ is simple form an open dense subset of $\R^X$,
\cite{Ro88}.

Next, we describe the three $\CI$ ideals. To this end, we consider
submatrices $Y\subset X$ (that are obtained from $X$ by removing
some of its columns) that are of rank $n-1$. The column span of such
a submatrix is a {\bf facet hyperplane} of $X$, and we denote by
$$\CF(X)$$
the set of all facet hyperplanes of $X$. The normal  to a facet hyperplane
$F$ (defined uniquely  up to a scalar) is denoted by
$$\eta_F,$$
and the {\bf multiplicity} (in $X$) of a facet hyperplane $F\in \CF(X)$ is
the cardinality
$$m(F):=m_X(F):=\#\{x\in X: x\not\in F\}.$$
The three $\CI$ ideals are generated, each, by suitable powers
of the normal polynomials:
$$\{p_{\eta_F}^{m(F)+\eps}:\ F\in \CF(X)\}.$$
The internal ideal $\CI_-(X)$ corresponds to the choice $\eps=-1$,
the central ideal $\CI(X)$ corresponds to the choice $\eps=0$,
while the external ideal  $\CI_+(X)$ corresponds to the choice
$\eps=+1$. We now state a result that connects these three ideals
to the hyperplane arrangement $\CH(X,\lam)$. In the sequel, we
will also show the connection of these three ideals to the
zonotope $Z(X)$, and to several other constructs.

\begin{theorem}
\hfill
\begin{enumerate}
\item $\codim\CI(X)$ equals the number of vertices in the simple arrangement
$\CH(X,\lam)$.

\item $\codim\CI_+(X)$ equals the number of connected components
in $\Rn\bks \CH (X,\lam)$.

\item $\codim\CI_-(X)$ equals the number of bounded connected components
in $\Rn\bks \CH(X,\lam)$.

\end{enumerate}

\end{theorem}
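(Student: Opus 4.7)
The plan is to prove each assertion by exhibiting a common matroid-theoretic statistic of $X$ that equals both sides. On the geometric side I will use the simplicity of $\CH(X,\lam)$ together with Zaslavsky's theorem on counting regions of a real arrangement, and on the algebraic side I will rely on the Hilbert series formulas for the three power ideals supplied by zonotopal algebra.

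For (1), simplicity of $\CH(X,\lam)$ says exactly that $\bigcap_{x\in Y}H_{x,\lam_x}$ is nonempty and of codimension $|Y|$ precisely when $Y\subseteq X$ is linearly independent. Hence the vertices, which are the codimension $n$ intersections, are in bijection with the bases of the matroid of $X$, so the vertex count equals the number of bases $b(X)$. I would then invoke the central identity $\codim\CI(X)=b(X)$ of zonotopal algebra; directly, one produces for each basis $B$ a representative in $\ker\CI(X)$ by applying the least map to $\prod_{x\notin B}p_x$ and verifies that these representatives form a basis of $\ker\CI(X)$.

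For (2) and (3), I would apply Zaslavsky's theorem: the number of regions and the number of bounded regions of $\CH(X,\lam)$ are $(-1)^n\chi(-1)$ and $(-1)^n\chi(1)$ respectively, where $\chi$ is the characteristic polynomial of the intersection lattice. Under simplicity this lattice is determined by the matroid $M(X)$, and the two evaluations become concrete combinatorial counts: the total number of independent subsets of $X$, and a bounded-region count that coincides with a Tutte-polynomial evaluation of $M(X)$ picking out bases of trivial internal activity in an appropriate ordering. On the algebraic side, the Hilbert series formulas of \cite{HR} match $\codim\CI_+(X)$ to the number of independent subsets, and $\codim\CI_-(X)$ to the bounded-region count, yielding the desired equalities.

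I expect the main obstacle to be the algebraic side. The geometric identities follow from classical arrangement theory, but establishing the codimension formulas requires identifying $\ker\CI(X)$, $\ker\CI_+(X)$, and $\ker\CI_-(X)$ with the central, external, and internal $\CP$-spaces of zonotopal algebra and computing their dimensions via the least-map and valuation-function machinery developed in \cite{HR}. Once these identifications are in hand, matching the matroid-theoretic counts on either side proves all three statements of the theorem.
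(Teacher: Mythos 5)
This theorem appears in the paper only as quoted background (it is the survey portion of the introduction, with the substance imported from \cite{HR} and the earlier spline literature), so there is no in-paper proof to compare against; judged on its own, your route is the standard one and is essentially correct. Under simplicity the vertices are exactly the intersections indexed by bases, Zaslavsky's theorem applied to the intersection poset of a simple arrangement (which is the independence poset of $X$, with Boolean intervals) gives $\#\I(X)$ regions and, for the bounded regions, the Tutte evaluation counting bases of zero internal activity, i.e.\ $\#\B_-(X)$; matching these against $\codim\CI(X)=\#\B(X)$, $\codim\CI_+(X)=\#\I(X)$ and $\codim\CI_-(X)=\#\B_-(X)$ (Theorems~\ref{th:exzono} and~\ref{th:plus} here, and the internal case of \cite{HR}) finishes the argument. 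One detail in your sketch of (1) is wrong, though: the polynomials $\prod_{x\notin B}p_x$, $B\in\B(X)$, cannot form a basis of $\ker\CI(X)=\CP(X)$ --- they are all homogeneous of the top degree $N-n$ (so the least map acts on them as the identity and buys nothing), whereas $\CP(X)$ contains the constants; the correct homogeneous basis is $Q_B=p_{X(B)}$, $B\in\B(X)$, as in Theorem~\ref{th:basis}, and the least map enters on the dual side, producing $\CD(X)=\Pi(V)$ from the vertex set $V$. You are also right that the real work lies in the identifications $\ker\CI=\CP$, $\ker\CI_+=\CP_+$, $\ker\CI_-=\CP_-$ and their dimension counts, which this paper takes as given from \cite{HR}.
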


An important highlight of the $\CI$-ideals is that their associated
kernels can be described cleanly and explicitly in terms of the
columns of $X$. Here, we discuss this point in the context of
the central zonotopal space.
Given $Y\subset X$,\footnote{Recall that we refer to $X$ as the multiset of its
columns, hence $Y$ is obtained by removing some columns of $X$.}
we say that $Y$ is {\bf short} if $\rank(X\bks Y)=n$. A {\bf short
polynomial} is a product
\begin{equation}
p_Y:=\prod_{y\in Y}p_y,
\end{equation}
over a short subset $Y$.
We let
$$
\CP(X) := \spam\{p_Y:\rank(X\setminus Y)=n\},
$$
be the span of the short polynomials.
A subset $Y\subset X$ that is not short is called {\bf long}.
Let $\CJ(X)$ denote the ideal generated by the {\bf long
polynomials}:
$$
\CJ(X):={\rm Ideal}\{p_Y: Y\subset X,\ \rank(X\bks Y)<n \},
$$
 and set
$$
\CD(X) := \ker \CJ(X).
$$
Below we collect some  of the main results of
\cite{AS88,BDR91,BR91,DM85,DM89,DR90}; see
also~\cite[Theorem~3.8]{HR}, where this summary appears in its
present form. We use the notation
$$
\B(X)\,:=\,\{B\subset X : B \mbox{ is a basis for } \R^n\},
$$
as well as the pairing
$$\Pi\to\Pi^\ast: p\mapsto \inpro{p,\cdot},\quad
\inpro{p,q}:=(p(D)q)(0)=(q(D)p)(0).$$
The space $\Pi(V)$ that appears in Theorem~\ref{th:exzono}
is defined in Section~\ref{sec:least}.

\goodbreak
\begin{theorem}\label{th:exzono}
\hfill
\begin{enumerate}
\item $\dim\CP(X)=\dim\CD(X)=\#\B(X)$.
\item The map $p\mapsto\inpro{p,\cdot}$ is an isomorphism
between $\CP(X)$ and $\CD(X)^\ast$.
\item $\CD(X)=\Pi(V)$, with $V$ the vertex set of the hyperplane arrangement
$\CH(X,\lam)$.
\item $\CP(X)=\ker\CI(X)$.
\item $\CP(X)\bigoplus\CJ(X)=\Pi$.

\end{enumerate}
\end{theorem}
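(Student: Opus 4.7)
The plan is to view the five assertions as a single package held together by a dimension identity on one side and a non-degenerate pairing on the other. The central strategy is induction on $N$ via a deletion-contraction with respect to a non-coloop column $x\in X$. The two inductive matrices are $X\setminus x$ (same ambient $\Rn$) and the contraction $X/x$, obtained by projecting $X\setminus x$ modulo $\spam\{x\}$ and thus living in $\R^{n-1}$. The base case, when $X$ is a single basis, makes each of (1)--(5) immediate: $\CP(X)=\C$, $\CJ(X)$ is generated by a full set of linear forms so $\CD(X)=\C$, and both $\CI(X)$ and $\CJ(X)$ coincide with the maximal graded ideal generated by the columns.

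For the inductive step I would first establish three matching deletion-contraction recursions. The combinatorial one, $\#\B(X)=\#\B(X\setminus x)+\#\B(X/x)$, is standard matroid theory. On the $\CP$ side, I would show that every short polynomial either lies in $\CP(X\setminus x)$ (when $x$ is not in the factor set) or splits as $p_x$ times a short polynomial for $X/x$ (after identifying $\Pi$ for the contraction with polynomials annihilated by $\partial_x$), giving $\dim\CP(X)\le \dim\CP(X\setminus x)+\dim\CP(X/x)$. On the $\CD$ side, the kernel $\CD(X)$ decomposes similarly: the part killed by $p_x(D)$ matches $\CD(X\setminus x)$, and the image under $p_x(D)$ is $\CD(X/x)$. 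Combining these recursions with the matroid recursion and the trivially verified base case yields $\dim\CP(X)=\dim\CD(X)=\#\B(X)$, which is assertion (1); the key is that each recursion upper-bounds the left-hand side by the right, so equality in all three is forced simultaneously.

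Assertions (2) and (4) follow from part (1) by dimension count once two containments are verified directly. For (2), I would check that any long polynomial $p_Y$, acting as a differential operator, annihilates every short polynomial $p_{Y'}$: expanding $p_Y(D) p_{Y'}$ via the product rule produces a sum indexed by pairings of $Y$ with $Y'$, and the long condition $\rank(X\setminus Y)<n$ forces every term to be divisible by some $p_x$ with $x\in X\setminus(Y\cup Y')$, which evaluates to zero at the origin. This shows $\CJ(X)\cdot \CP(X)\subseteq\ker\,\delta_0$ in the pairing, hence $\CP(X)\hookrightarrow (\Pi/\CJ(X))^\ast=\CD(X)^\ast$; the dimensions having been matched in (1), the map is an isomorphism, proving (2) and, after observing along the way that $\CJ(X)\cap\CP(X)=\{0\}$, also (5). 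For (4), I would check the containment $\CP(X)\subseteq \ker\CI(X)$ by acting with $p_{\eta_F}^{m(F)}$ on a short $p_Y$: among the $m(F)$ factors produced there must be at least one $p_x$ with $x\in F$ (otherwise $X\setminus Y$ would contain only $F$-vectors and fail to span), killing the product at $0$; again dimensions match.

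The step I expect to be the main obstacle is assertion (3), $\CD(X)=\Pi(V)$, because it is the only place where the hyperplane arrangement $\CH(X,\lam)$ enters the kernel side. Here the plan depends on the least-map formalism of Section~\ref{sec:least}, which is not yet available at this point in the excerpt. The strategy is: the exponential $e^{v\cdot\,}$, for $v$ a vertex of $\CH(X,\lam)$, satisfies every differential equation of the form $p_Y(D-\lam|_Y)e^{v\cdot\,}=0$ whenever $Y$ is long, because $v$ lies on at least $n-\rank(X\setminus Y)+|Y\cap?|$ of the defining hyperplanes and the simplicity of $\CH(X,\lam)$ forces $p_y(v)=\lam_y$ for sufficiently many $y\in Y$; passing to the ``least'' (lowest-degree homogeneous) part of the span of these exponentials shows $\Pi(V)\subseteq\CD(X)$. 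The reverse inclusion then follows from the dimension count $\dim\Pi(V)=\#V=\#\B(X)=\dim\CD(X)$, the first equality being a property of the least map applied to generic exponentials and the middle equality being the well-known bijection between vertices of a simple arrangement and bases of $X$.
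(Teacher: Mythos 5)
First, a point of reference: the paper does not prove Theorem~\ref{th:exzono} at all --- it is stated as a summary of known results and attributed to \cite{AS88,BDR91,BR91,DM85,DM89,DR90} and to \cite[Theorem~3.8]{HR}. So your attempt cannot be matched against an argument in the text; it has to stand on its own, and as written it does not, because the deletion--contraction engine that drives your proof of (1) is false as stated. You claim that a short polynomial $p_Y$ with $x\notin Y$ lies in $\CP(X\bks x)$. Take $X=\{e_1,e_2,e_1+e_2\}$ in $\R^2$ and the non-coloop $x=e_1$: the set $Y=\{e_2\}$ is short in $X$, but $(X\bks x)\bks Y=\{e_1+e_2\}$ has rank $1$, so $Y$ is long in $X\bks x$ and $\CP(X\bks x)=\spam\{1\}$ does not contain $p_{e_2}=t_2$; nor is $t_2$ divisible by $p_{e_1}$. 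Hence your claimed spanning $\CP(X)\subset\CP(X\bks x)+p_x\cdot\CP(X/x)$ fails, and the graded version of your recursion would even give the wrong Hilbert series ($1+t+t^2$ instead of the correct $1+2t$ in this example). The correct exact sequence runs the other way: $0\to\CP(X\bks x)\xrightarrow{\,\cdot p_x\,}\CP(X)\to\CP(X/x)\to 0$, i.e.\ it is the \emph{deletion} that gets multiplied by $p_x$ and the \emph{contraction} that appears as a quotient. Your $\CD$-side decomposition is transposed in the same way: in the example above, $\CD(X)\cap\ker D_{e_1}=\spam\{1,t_2\}$ is two-dimensional and matches $\CD(X/x)$, not $\CD(X\bks x)=\spam\{1\}$; the correct sequence is $0\to\CD(X/x)\to\CD(X)\xrightarrow{D_x}\CD(X\bks x)\to 0$, and the genuinely hard step --- surjectivity of $D_x$ onto $\CD(X\bks x)$ --- is not addressed by your sketch.

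Two further logical gaps. First, "each recursion upper-bounds the left-hand side by the right, so equality in all three is forced" is not a valid deduction from upper bounds alone; you need a matching lower bound somewhere (e.g.\ $\dim\CD(X)\ge\#V=\#\B(X)$ from the least map, Theorem~\ref{th:pi}, or linear independence of the $Q_B$) to anchor the squeeze, and in your write-up that lower bound only appears at the very end, inside part (3). Second, for assertions (2) and (5) you treat $\CJ(X)\cap\CP(X)=\{0\}$ as an incidental observation and never establish $\CJ(X)+\CP(X)=\Pi$; but the standard non-degeneracy argument (write $q\in\CD(X)$ as $f+p$ with $f\in\CJ(X)$, $p\in\CP(X)$, and use $\inpro{q,q}=\inpro{q,p}\ne0$, exactly as in the paper's proof of part 5 of Theorem~\ref{thm 2.6}) \emph{requires} the sum decomposition as input, so your order of deduction is circular. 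Your treatment of (4) (the inclusion $\CP(X)\subset\ker\CI(X)$ via $\#(Y\bks F)<m(F)$ for short $Y$) and of (3) (via Theorem~\ref{th:pi} plus a dimension count) are essentially correct in outline, though the mechanism in (3) is that the affine polynomials $\prod_{y\in Y}(p_y-\lam_y)$ vanish on $V$, not that the homogeneous $p_Y(D)$ kills the exponentials directly.
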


%%%%%%%%%%%%%%%%%%%     THE LEAST MAP   %%%%%%%%%%%%%%%%%%%%%%%%%%%%%%%%%%%
\subsection{\label{sec:least}The least map $V\mapsto \Pi(V)$}
%%%%%%%%%%%%%%%%%%%%%%%%%%%%%%%%%%%%%%%%%%%%%%%%%%%%%%%%%%%%%%%%%%%%%%%%%%%

\begin{definition} Let $V$ be a finite pointset in $\Rn$. Given
$v\in \Rn$, let
$$e_v: t\mapsto e^{v\cdot t}$$
be the exponential with frequency $v$, and define
$$\Exp(V):=\spam\{e_v:\ v\in V\}.$$
Given $f\in \Exp(V)$, let
$$f=\sum_{j=j_f}^\infty f_j,\quad j_f\ge 0,$$
be its homogeneous power expansion, i.e., $f_j\in \Pi_j^0$, for all $j$,
and $f_{j_f}\not=0$.
Define
$$f\least:=f_{j_f},\quad \Pi(V):=\spam\{f\least: f\in \Exp(V)\bks 0\}.$$
\end{definition}

Our interest in this paper is focused on point sets $V$ that are
either subsets of the set of integer points in the zonotope
$Z(X)$, or subsets of the vertex set $V(X,\lam)$ of the hyperplane
arrangement $\CH(X,\lam)$. Note that in the latter case, since we
assume $\CH(X,\lam)$ to be simple, we have that
$\#V(X,\lam)=\#\B(X)$, and there is a set bijection
$$V\ni v\mapsto B_v\in \B(X),$$
that sends each vertex $v$ to the set of columns of $X$ whose
hyperplanes contain $v$. Thus, simplicity means in this context
that the latter set is always a basis, and never larger than that.
Now, given $V'\subset V(X,\lam)$, let $\B':=\B'(X):=\{B_v: \ v\in
V'\}$, and define the ideal
$$\CJ_{\B'}(X):=\Ideal\{p_Y: \;\;  Y\subset X, \;\;  Y\cap B\not=\emptyset,
\text{ for all } B\in\B'\}.$$

\begin{theorem}[\cite{BR90,BR91}]\label{th:pi}

\hfill
\begin{enumerate}
\item For every finite $V\subset \Rn$, the restriction map
$$\Pi(V)\ni f\mapsto f_{|V}$$ is an isomorphism between $\Pi(V)$
and $\C^V$. In particular, $\dim \Pi(V)=\#V$.

\item
With $V'$, $\B'$ and $\CJ_{\B'}(X)$ as above, we have that
$\Pi(V')\subset \ker \CJ_{\B'}(X)$. In particular,
$$\codim\CJ_{\B'}(X)\ge \dim\Pi(V')=\#V'=\#\B'.$$
\end{enumerate}
\end{theorem}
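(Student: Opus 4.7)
For part (1), my plan is to filter $\Exp(V)$ by $F_j := \{f \in \Exp(V) : f_i = 0 \text{ for all } i < j\}$; equivalently, $f = \sum_v c_v e_v$ lies in $F_j$ iff the moments $\sum_v c_v r(v)$ vanish against every $r \in \Pi$ of degree less than $j$. The least-map assignment $f \mapsto f_j$ defines a linear injection $F_j / F_{j+1} \hookrightarrow \Pi_j^0$ whose image is the degree-$j$ graded piece of $\Pi(V)$. Polynomials separate the finite point set $V$, so $\bigcap_j F_j = 0$, and a telescoping count yields $\dim \Pi(V) = \dim \Exp(V) = \#V$, the second equality being linear independence of exponentials with distinct frequencies. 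For the restriction isomorphism, the invertibility of the exponential matrix $(e^{v \cdot v'})_{v, v' \in V}$ (a Vandermonde-type identity) identifies $\Exp(V)$ with $\C^V$, and an LU / Gaussian-elimination reduction on the basis $\{e_v\}$ (the classical de Boor--Ron least-interpolation construction) produces a unisolvent basis of $\Pi(V)$; the matching dimensions then force $\Pi(V) \to \C^V$, $f \mapsto f_{|V}$, to be a linear isomorphism.

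For part (2), the content reduces to showing $p_Y(D) g\least = 0$ for every generator $p_Y$ of $\CJ_{\B'}(X)$ (so $Y \cap B \neq \emptyset$ for all $B \in \B'$) and every $g = \sum_{v \in V'} c_v e_v \in \Exp(V') \setminus 0$; the inclusion $\Pi(V') \subseteq \ker \CJ_{\B'}(X)$ and the bound $\codim \CJ_{\B'}(X) \ge \#\B'$ then follow from (1). If $j_g < |Y|$ the claim is automatic by degree, so I may assume $j_g \geq |Y|$ and test against an arbitrary $q \in \Pi_{j_g - |Y|}^0$. Commutativity of constant-coefficient differential operators, together with the fact that $\deg(q p_Y) = j_g$ isolates $g\least$ from $g$ under evaluation at $0$, gives
\[
\inpro{q, p_Y(D) g\least} \;=\; \inpro{q p_Y, g\least} \;=\; (q p_Y)(D) g(0) \;=\; \sum_v c_v q(v) p_Y(v),
\]
reducing the problem to $\sum_v c_v q(v) p_Y(v) = 0$.

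The decisive step is to switch from the linear polynomials $p_y(t) = y \cdot t$ to the affine polynomials $\widetilde p_y := p_y - \lam_y$ that cut out the hyperplanes $H_{y,\lam_y}$. Writing $p_y = \widetilde p_y + \lam_y$ and expanding,
\[
p_Y \;=\; \sum_{S \subseteq Y} \Bigl(\prod_{y \in Y \setminus S} \lam_y\Bigr)\, \widetilde p_S, \qquad \widetilde p_S := \prod_{y \in S} \widetilde p_y,
\]
so that
\[
\sum_v c_v q(v) p_Y(v) \;=\; \sum_{S \subseteq Y} \Bigl(\prod_{y \in Y \setminus S} \lam_y\Bigr) \sum_v c_v (q \widetilde p_S)(v).
\]
For $|S| < |Y|$, the polynomial $q \widetilde p_S$ has degree strictly less than $j_g$, and the defining property $g \in F_{j_g}$ forces the inner sum to vanish; for $S = Y$, the polynomial $\widetilde p_Y(v) = \prod_{y \in Y}(y \cdot v - \lam_y)$ vanishes at each $v \in V'$, because the hitting-set hypothesis $Y \cap B_v \neq \emptyset$ supplies some $y \in Y$ with $v \in H_{y,\lam_y}$, i.e.\ $\widetilde p_y(v) = 0$. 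Both contributions vanish, so $p_Y(D) g\least = 0$, as required. The main obstacle is exactly this passage to $\widetilde p_y$: without the affine normalization, the linear $p_y$ ignore the hyperplane shifts $\lam_y$, and the incidence data defining $B_v$ cannot be coupled to the moment-vanishing condition $g \in F_{j_g}$.
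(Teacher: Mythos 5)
The paper offers no proof of this theorem---it is quoted from \cite{BR90,BR91}---so the comparison is with the classical de~Boor--Ron argument, and your part (2) is precisely that argument, correctly and completely executed: reduction to the generators $p_Y$, pairing $p_Y(D)g\least$ against $\Pi^0_{j_g-|Y|}$ (legitimate because the pairing is nondegenerate on each graded piece), and the binomial expansion of $p_Y$ in the affine forms $\widetilde p_y=p_y-\lam_y$, where the $S=Y$ term vanishes on $V'$ by the hitting-set condition $Y\cap B_v\neq\emptyset$ and the $S\subsetneq Y$ terms vanish by the moment conditions defining $F_{j_g}$. In part (1) the filtration/telescoping count giving $\dim\Pi(V)=\#V$ is complete and standard, but the isomorphism claim has a small logical slip as written: equal dimensions do not by themselves ``force'' $f\mapsto f_{|V}$ to be an isomorphism --- you still need injectivity (equivalently surjectivity), which amounts to nondegeneracy of the pairing between $\Pi(V)$ and $\Exp(V)$ via $\inpro{p,e_v}=p(v)$. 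The Gaussian-elimination construction you invoke does deliver exactly this (it produces dual bases $\{p_i\}\subset\Pi(V)$, $\{g_i\}\subset\Exp(V)$ with $\inpro{p_i,g_k}=\delta_{ik}$), but that is the step carrying the real content of the unisolvence statement, and in your write-up it is cited rather than carried out. Since the theorem is itself presented in the paper as a citation, this is a forgivable level of detail, but be aware that ``matching dimensions'' alone prove nothing there.
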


Note that the choice $V':=V(X,\lam)$ leads to $\B'=\B(X)$, and to
$\CJ_{\B'}(X)=\CJ(X)$. Hence the above result shows that
$\Pi(V(X,\lam))\subset \ker\CJ(X)$.  Theorem~\ref{th:exzono} asserts
that for this particular choice of $V'$ equality holds:
$\Pi(V(X,\lam))= \ker\CJ(X)$. For other choices of $\B'$, however, 
this inclusion may be proper.

\smallskip
The least map, thus, connects  the vertices of $\CH(X,\lam)$ to
the $\CJ$-ideals. It also connects the integers points in the 
zonotope $Z(X)$ to the $\CI$-ideal. To this end, recall that
$X$ is {\bf unimodular} if $X\subset \Zn$ and $|\det B|=1$, for every
$B\in \B(X)$. Furthermore, let $\I(X)$  be the collection of
(linearly) {\bf independent subsets} of $X$:
$$
\I(X)\,:=\,\{I\subset X: I \mbox{ is independent in } \R^n\}.
$$

\begin{theorem}[\cite{HR}]\label{th:plus}

\hfill
\begin{enumerate}
\item $\ker \CI_+(X)=\CP_+(X):=\spam\{p_Y:\ Y\subset X \}$.
\item $\dim\CP_+(X)=\# \I(X)$.
\item Assume $X$ is unimodular. Then $\CP_+(X)=\Pi(Z(X)\cap \Zn)$.
\end{enumerate}
\end{theorem}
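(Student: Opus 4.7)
For part (1), the containment $\CP_+(X)\subset\ker\CI_+(X)$ is a direct check on generators. Given a product $p_Y=\prod_{y\in Y}p_y$ and a facet $F\in\CF(X)$, the operator $p_{\eta_F}(D)$ annihilates each factor $p_y$ for which $y\in F$ (since $\eta_F\cdot y=0$). The factors that can survive differentiation are those with $y\in Y\bks F$, of which there are at most $m(F)=\#\{x\in X:x\notin F\}$; consequently $p_{\eta_F}(D)^{m(F)+1}$ must annihilate $p_Y$. This gives $\CP_+(X)\subset\ker\CI_+(X)$, and equality will follow once the dimensions are matched.

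For part (2), I would induct on $\#X$ via matroidal deletion--contraction. Picking $x\in X$ and splitting spanners according to whether $x\in Y$ yields
$$\CP_+(X)=\CP_+(X\bks x)+p_x\cdot\CP_+(X\bks x),$$
mirroring the matroid recursion $\#\I(X)=\#\I(X\bks x)+\#\I(X/x)$, where $X/x$ denotes the image of $X\bks x$ modulo $\spam\{x\}$. The heart of the induction is to show
$$\dim\bigl(\CP_+(X)/\CP_+(X\bks x)\bigr)=\dim\CP_+(X/x),$$
which amounts to identifying the intersection $\CP_+(X\bks x)\cap p_x\cdot\CP_+(X\bks x)$ with $p_x\cdot K$, where $K$ is the kernel of the natural projection $\CP_+(X\bks x)\twoheadrightarrow\CP_+(X/x)$ induced by reduction modulo $\spam\{x\}$. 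This is the principal technical obstacle: the sum above is not manifestly direct, and pinning down the intersection requires careful matroidal bookkeeping. Once the recursion is established, induction yields $\dim\CP_+(X)=\#\I(X)$, and part~(1) is closed by matching dimensions: the earlier theorem on $\CH(X,\lam)$ gives $\codim\CI_+(X)$ as the number of regions of the arrangement, and Zaslavsky's classical formula equates that region count with $\#\I(X)$, forcing $\ker\CI_+(X)=\CP_+(X)$.

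For part (3), I would exhibit each $p_Y$ as the least term of an exponential combination supported on $Z(X)\cap\Zn$. For each $Y\subset X$, set
$$f_Y:=\prod_{y\in Y}(e_y-1)=\sum_{Y'\subset Y}(-1)^{|Y\bks Y'|}e_{\sum_{y\in Y'}y}.$$
Each exponent $\sum_{y\in Y'}y$ lies in $Z(X)$ (choose $t_y=1$ on $Y'$ and $0$ elsewhere) and, by unimodularity, in $\Zn$, so $f_Y\in\Exp(Z(X)\cap\Zn)$. Expanding each factor as $e_y-1=p_y+O(\|t\|^2)$ shows $(f_Y)\least=p_Y$, whence $\CP_+(X)\subset\Pi(Z(X)\cap\Zn)$. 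Equality then follows by dimension: part~(2) supplies $\dim\CP_+(X)=\#\I(X)$, Theorem~\ref{th:pi}(1) gives $\dim\Pi(Z(X)\cap\Zn)=\#(Z(X)\cap\Zn)$, and Stanley's lattice-point identity $\#(Z(X)\cap\Zn)=\#\I(X)$ for unimodular $X$ completes the match.
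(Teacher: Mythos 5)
First, note that this paper does not prove Theorem~\ref{th:plus} at all: it is quoted verbatim from \cite{HR}, so your argument has to stand on its own (or on results the paper does state). Your part (1) containment $\CP_+(X)\subset\ker\CI_+(X)$ is correct: $D_{\eta_F}p_y$ is the constant $\eta_F\cdot y$, which vanishes for $y\in F$, so $D_{\eta_F}^{k}p_Y=0$ once $k>\#(Y\bks F)\le m(F)$. Part (3) is also essentially complete and is a legitimate proof: $f_Y:=\prod_{y\in Y}(e_y-1)$ lies in $\Exp(Z(X)\cap\Zn)$, its least term is $p_Y$ (the product of the nonzero least terms $p_y$), and the dimension match via Theorem~\ref{th:pi}(1) together with Stanley's count $\#(Z(X)\cap\Zn)=\#\I(X)$ for unimodular $X$ closes the argument. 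Using Theorem 1.1 plus Zaslavsky to convert $\codim\CI_+(X)$ into $\#\I(X)$ is acceptable \emph{within this paper's} presentation (Theorem 1.1 is stated first), though you should be aware that in \cite{HR} the logical order runs the other way, so that route would be circular if you were reconstructing \cite{HR} itself.

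The genuine gap is in part (2), and it is the load-bearing step. You reduce everything to the deletion--contraction identity $\dim\bigl(\CP_+(X)/\CP_+(X\bks x)\bigr)=\dim\CP_+(X/x)$ and then explicitly decline to prove it (``the principal technical obstacle''). Worse, the identification you propose is false as an equality of subspaces: take $n=1$ and $X=\{e_1,e_1\}$. Then $\CP_+(X\bks x)=\spam\{1,t\}$, $p_x\CP_+(X\bks x)=\spam\{t,t^2\}$, so the intersection is $\spam\{t\}$; but $K=\ker\bigl(\CP_+(X\bks x)\to\CP_+(X/x)\bigr)=\spam\{t\}$, so $p_x\cdot K=\spam\{t^2\}$. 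The dimensions agree (which is all you actually need), but the claimed equality of spaces does not, which suggests the step has not been thought through; and proving even the dimension statement in general is exactly the hard matroidal content of the theorem. A cleaner route, using only what this paper supplies, is Theorem~\ref{th:basis}(2): the polynomials $Q_I=p_{X(I)}$, $I\in\I(X)$, form a basis of $\CP_+(X)$, which gives $\dim\CP_+(X)=\#\I(X)$ immediately. (If you are not allowed to cite that either, the standard argument shows the $Q_I$ span $\CP_+(X)$ by expanding an arbitrary $p_Y$ in the homogeneous $Q$-basis of the central space of $X\cap\spam(X\bks Y)$ --- this is precisely the computation carried out in the proof of Theorem~\ref{thm 2.6}, part 1 --- and shows independence by a leading-term/annihilation argument.) As it stands, part (2), and hence the dimension match that closes part (1), is not established.
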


%%%%%%%%%%%%%%%%%%%  HILBERT FUNCTIONS  %%%%%%%%%%%%%%%%%%%%%%%%%%%%%%%%%
\subsection{Hilbert series}
%%%%%%%%%%%%%%%%%%%%%%%%%%%%%%%%%%%%%%%%%%%%%%%%%%%%%%%%%%%%%%%%%%%%%%%%%

The Hilbert series of the three $\CI$-ideals are closely related to
the external activity of the Tutte polynomial of the (matroid of the) 
given multiset $X$. Let $\prec$ be any order on $X$. Given a set $Y\subset X$, 
we define the valuation of $Y$ (per the given order) by
$$\val(Y):=\#X(Y),\quad X(Y):=\{x\in X\bks Y:\ x\not\in \spam\{y\in Y:\
y\prec x\}\}.$$  This valuation function determines the Hilbert
series of the $\CI$-ideals as follows.

\begin{theorem}[\cite{DR90,HR}]\label{th:basis}

\hfill
\begin{enumerate}
\item The polynomials
$$Q_B:=p_{X(B)},\quad B\in\B(X)$$
form a basis for $\CP(X)$. In particular, for every positive integer $j$,
$$\dim(\CP(X)\cap \Pi_j^0)=\#\{B\in \B(X) : \; \val(B)=j\}.$$
\item The polynomials
$$Q_I:=p_{X(I)},\quad I\in\I(X)$$
form a basis for $\CP_+(X)$. In particular, for every positive integer $j$,
$$\dim(\CP_+(X)\cap \Pi_j^0)=\#\{I\in \I(X) : \; \val(I)=j\}.$$
\end{enumerate}
\end{theorem}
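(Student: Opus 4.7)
The approach has three stages: verify membership, use the already-established dimension counts to reduce the problem to spanning, and prove spanning by a syzygy-driven induction on short subsets (and on arbitrary subsets for part~2).

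For the first stage, note that for $B \in \B(X)$, the set $X(B)$ is a subset of $X\setminus B$, and since $B$ itself has full rank, $X\setminus X(B) \supseteq B$ also has full rank $n$; hence $X(B)$ is short and $Q_B \in \CP(X)$, homogeneous of degree $\#X(B) = \val(B)$. For $I \in \I(X)$, membership of $Q_I = p_{X(I)}$ in $\CP_+(X) = \spam\{p_Y : Y \subseteq X\}$ is immediate. By Theorem~\ref{th:exzono}(1) and Theorem~\ref{th:plus}(2), $\dim\CP(X) = \#\B(X)$ and $\dim\CP_+(X) = \#\I(X)$, so both families have the correct cardinality and it suffices to prove spanning.

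For part~1, I would induct on a well-founded order on the short subsets of $X$: sort each short $Y$ in $\prec$-decreasing order and compare such tuples lexicographically. The minimal sets in this order are precisely the sets $X(B)$, $B\in\B(X)$. Indeed, let $B^\ast = B^\ast(Y)$ denote the greedy-from-below basis of $X\setminus Y$ (a basis of $X$ since $Y$ is short); the greedy property gives $\spam\{y\in B^\ast : y\prec x\}=\spam\{z\in X\setminus Y : z\prec x\}$ for every $x$, so $X(B^\ast)\subseteq Y$, with equality iff $Y$ admits no further reduction. The inverse assignment $B\mapsto X(B)$ satisfies $B^\ast(X(B))=B$ by linear independence of $B$, yielding the required bijection.

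When $X(B^\ast)\subsetneq Y$, there exists $x\in Y$ with $x\in\spam\{z\in X\setminus Y : z\prec x\}$, producing a circuit $C\subseteq\{x\}\cup\{z\in X\setminus Y : z\prec x\}$ and a linear relation $\sum_{c\in C}\lam_c\, p_c=0$ with $\lam_x\ne 0$. Multiplying by $p_{Y\setminus\{x\}}$ expresses $p_Y$ as a linear combination of $p_{Y'(z)}$ with $Y'(z)=(Y\setminus\{x\})\cup\{z\}$, $z\in C\setminus\{x\}$. Each $Y'(z)$ is again short --- if removing $z$ from $X\setminus Y$ drops the rank, then minimality of $C$ forces $x\notin\spam((X\setminus Y)\setminus\{z\})$, so the rank is restored upon adjoining $x$ --- and strictly precedes $Y$ in the lex order, since $x$ is replaced by a $\prec$-smaller element. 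Induction terminates at the base cases $Y=X(B)$, giving spanning; together with the cardinality match this proves $\{Q_B\}$ is a basis, and sorting by degree via $\deg Q_B=\val(B)$ gives the graded refinement. Part~2 runs identically, with $\I(X)$ replacing $\B(X)$ and arbitrary subsets replacing short ones, and the shortness check is no longer needed; the base cases become $Y=X(I)$. The main obstacle is that shortness verification inside part~1, which depends on the matroid circuit-exchange argument above; the rest is bookkeeping.
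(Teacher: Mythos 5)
The paper itself offers no proof of this theorem --- it is quoted as background from \cite{DR90,HR} --- so there is nothing internal to compare against; judged on its own, your argument is correct and is essentially the classical one from those references. The two halves are sound: the rewriting step is the standard circuit-exchange reduction (a dependency $\sum_{c\in C}\lam_c p_c=0$ with $\lam_x\neq 0$ lets you trade $x\in Y$ for $\prec$-smaller elements of $X\setminus Y$, preserving shortness exactly as you argue), it terminates at the sets $X(B^\ast(Y))$, and a spanning family whose cardinality equals the dimension is automatically a basis. The only thing worth flagging is that your linear independence is obtained indirectly, by leaning on $\dim\CP(X)=\#\B(X)$ and $\dim\CP_+(X)=\#\I(X)$ from Theorems~\ref{th:exzono} and~\ref{th:plus}; this is legitimate within the paper's ordering of results, but in the original sources independence of the $Q_B$ (resp.\ $Q_I$) is typically proved directly (via a triangularity/duality argument with the least map), and in \cite{HR} the count $\dim\CP_+(X)=\#\I(X)$ is in fact deduced from the basis property rather than the other way around --- so your proof should be read as conditional on the stated dimension formulas rather than as an independent derivation of them. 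The graded refinement follows, as you say, from homogeneity and $\deg Q_B=\val(B)$.
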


%%%%%%%%%%%%%%%%%%   INTERMEDIATE SETUPS  %%%%%%%%%%%%%%%%%%%%%%%%%%%%%%%%%
\subsection{Intermediate setups}
%%%%%%%%%%%%%%%%%%%%%%%%%%%%%%%%%%%%%%%%%%%%%%%%%%%%%%%%%%%%%%%%%%%%%%%%%%%

Zonotopal algebra interweaves, thus, algebraic, geometric and combinatorial
properties of the linear map $X$. Of basic significance to this theory is the
fact that the algebraic structures are derived from the geometry (via
the least map), and that the statistics of the algebraic structures
(i.e., the various Hilbert series) are combinatorial, i.e.,
computable using the valuation function. On the other hand, the theory is somewhat
rigid since it deals with exactly three sets of ideals for each given $X$,
their three Hilbert series and so on.

In this paper we show that the fundamental principles of zonotopal
algebra as described in the previous paragraph extend far beyond
the previously known setups of external, central and internal ideals. 
For example, let $\I'\subset \I(X)$, and let us define $Y\subset X$ 
to be {\bf $\I'$-short} if $X\bks Y$ contains an element of $\I'$. Define
$$\CP_+(X,\I')$$
to be the span of all $\I'$-short polynomials. The central case
$\CP(X)$ corresponds to the choice $\I'=\B(X)$, and the external
case $\CP_+(X)$ corresponds to the choice $\I'=\I(X)$. In either
of these extreme cases, the dimension of the polynomial space
$\CP_+(X,\I')$ coincides  with the cardinality of $\I'$.

The following questions now arise naturally:

\begin{itemize}
\item Do we have $\dim\CP_+(X,\I')=\#\I'$?
\item Is the ideal $\CI_+(X,\I')$ of all differential operators
that annihilate  $\CP_+(X,\I')$ still generated by powers
of the normals  $p_{\eta_F}$ to the facets?
\item Do the polynomials $Q_I$, $I\in \I'$ form a basis for
$\CP_+(X,\I')$ or, at least, does the valuation of the set
$X(I)$, $I\in\I'$ determine the Hilbert series of $\CP_+(X,\I')$?
\item Do we have a dual setup in terms of an ideal $\CJ_+(X,\I')$
that is generated by a suitable notion of long polynomials,
so that its kernel  is connected to a suitable set of vertices
of some hyperplane arrangement?
\end{itemize}

\medskip
It is somewhat surprising (at least to us) that the questions above
can all be answered in the affirmative for a large class of sets $\I'$.
We deal in  this paper with two different setups.

In the first one, which we refer to as semi-external, we select an
arbitrary subset $\I'$ of $\I(X)$ and impose only one condition on
it, viz., that $\I'$ should be closed under (subspace) inclusion in the
sense that if $\spam I\subset \spam I'$, for $I\in \I'$ and $I'\in
\I(X)$ then $I'\in\I'$. We introduce then a suitable \linebreak $\CJ$-ideal,
$\CJ_+(X,\I')$, and  a suitable $\CI$-ideal $\CI_+(X,\I')$, provide an
explicit description and bases for $\ker\CI_+(X,\I')$, develop an
algorithm for computing the Hilbert series of these ideals (both
share the same Hilbert series) in terms of the aforementioned
valuation, and describe a suitable geometric derivation of
$\ker\CJ_+(X,\I')$ (by acting on vertices of hyperplane
arrangements). The ideal $\CI_+(X,\I')$ is not generated, however,
by powers of the normal polynomials $p_{\eta_F}$, $F\in\CF(X)$.
That said, by assuming slightly more on $\I'$, that property can
be guaranteed as well.

In the second setup, which we refer to as semi-internal,
we select and fix an independent set $I\in \I(X)$, and develop a
theory where the focus is on the space
$$\cpm{X,I}:=\cap_{b\in I}\CP(X\bks b).$$
When $I$ is a basis, the above space coincides with the kernel
of $\CI_-(X)$. Once again, this case gives rise to a theory that parallels
in its ingredients the one outlined for the semi-external case.
In particular, we prove that the corresponding $\CI$-ideal is generated
by powers of the normals, and we further identify a subset
$\cbm{X,I}\subset \B(X)$ whose valuation determines the Hilbert
series of $\cpm{X,I}$.
\begin{remark}
In line with the tradition in multivariate spline theory,
we define our objects (polynomials, differential operators) on
the Euclidean space $\Rn$, using the natural inner product on that
space.  Thus, in our presentation $\Rn$ serves simultaneously as 
a vector space and as its dual space. Had we followed the tradition
in algebra and separated the space from its dual, some of the definitions 
and constructions would have been as follows: Start with an $n$-dimensional
vector space $W$. Then $X$ is a collection of nonzero vectors
in the dual vector space $W^\ast$. Each element of $X$ defines a
hyperplane in $W$. 
Let $t_1,\ldots,t_n$ be a basis of $W$, and let
$s_1,\ldots,s_n$ be the dual basis of $W^\ast$. The pairing that
we use $\inpro{p,q}=(p(D)q)(0)$ is in fact a pairing between
the polynomial rings
$\Pi=\C[t_1,\ldots, t_n] (= \Sym W^\ast)$ and $\Pi^\ast =\C[s_1,\ldots,s_n] ( = \Sym W)$. 
The $\CP$-spaces and the $\CJ$-ideals are subspaces of\, $\Pi$.
The $\CD$-spaces and the $\CI$-ideals are subspaces of\, $\Pi^\ast$.
The hyperplane arrangement $\CH(X)$ lies in $W$, while the zonotope
$Z(X)$ lies in $W^\ast$. 
\end{remark}

%%%%%%%%%%%%%%%%  SEMI_EXTERNAL SPACES  %%%%%%%%%%%%%%%%%%%%%%%%%%%%%%%%%%
\section{\label{sec:semi-ext}Semi-external zonotopal spaces}
%%%%%%%%%%%%%%%%%%%%%% EXTERNAL REVIEW %%%%%%%%%%%%%%%%%%%%%%%%%%%%%%%%%%%
\subsection{\label{sec:ext-review}A review of external zonotopal spaces}
%%%%%%%%%%%%%%%%%%%%%%%%%%%%%%%%%%%%%%%%%%%%%%%%%%%%%%%%%%%%%%%%%%%%%%%%%%

Recall that we consider our $n\times N$ matrix $X$ of full rank $n$ as a finite multiset
$X\subset \R^n\setminus \{0\}$ of size $N=\#X$. Also recall from Section~\ref{sec:motiv}
that $\B(X)$ denotes the multiset of all (linear) bases of $X$, and $\I(X)$
denotes the multiset of all (linearly) independent subsets of $X$.

The definition of the external ideal $\CJ_+(X)$ requires to choose
an additional basis $B_0$ for $\Rn$, and to order its elements.
There is no restriction on the choice of $B_0$ or on the chosen order
 $\prec$, but the definition of the ideal $\CJ_+(X)$ depends on the
choice of $B_0$ and the choice of the order. We augment $X$ by $B_0$
and define $X':=X\sqcup B_0$, where $\sqcup$ denote the union of two
 multisets, i.e., a collection obtained by listing all vectors in $X$
and all vectors in $B_0$.
%olga
We extend the order $\prec$  to a full order on $X'$, with the only
requirement on this extension that $x\prec b$,
for every $x\in X$ and $b\in B_0$. We use $B_0$ to extend each
independent subset $I\in \I(X)$ to a basis ${\rm ex}(I)\in \B(X')$
by a greedy completion,  i.e.,
$b\in \ex(I)$ if and only if $b\in I$ or else $b\in B_0$ and
$$
b\notin \spam \{I\cup \{b'\in B_0:b'\prec b\}\}.
$$
For each $I\in \I(X)$, we define
$$
X(I)\,:=\, \{x\in X\bks I: x\notin \spam \{b\in I: b\prec x\}\}.
$$
We associate each independent subset $I$ with the polynomial
$$Q_I:=p_{X(I)}.$$
Before proceeding  with introduction and analysis of the
semi-external zonotopal spaces, we pause momentarily in order to
discuss the (full) external case, whose theory was developed in
\cite{HR}. To this end, we recall the definition of the ideal
$\CI_+(X)$ and the space $\CP_+(X)$
from the introduction, and add the following definitions:

\begin{definition}
\begin{eqnarray*}
 \CJ_+(X)\,&:=&\,\Ideal\{p_Y:
Y\subset X',\ Y\cap \ex(I)\not=\emptyset,\
\text{ for all } I\in \I(X)\},\\
\CD_+(X)\, &:=&\, \ker \CJ_+(X).
\end{eqnarray*}
\end{definition}

Let
$\CH(X',\lam)$ be a simple hyperplane arrangement associated with
the extended set $X'$. Let $V':=V(X')$ be the vertex set of this
arrangement. Since $\CH(X',\lam)$ is assumed to be simple, there
is a bijection  $$\Cb: \B(X')\to V'$$
in which each basis $B$ is mapped to the intersection of the hyperplanes
$\{H_{b,\lam_b}: b\in B\}$. We denote
$$V_+:=\{\Cb(\ex (I)): I\in \I(X)\}.$$
Note that $V_+$ depends on $B_0$, on the order $\prec$ imposed on $B_0$,
and the parameter vector $\lam$. (It does not depend however on the
way $\prec$ is extended to $X$.)

Before describing the main result from \cite{HR} regarding the full external
case, we  recall the pairing from Section~\ref{sec:motiv} that plays an
important role in the underlying duality between the spaces $\CP_+(X)$ and
$\CD_+(X)$: Given two polynomials $p$ and $q$, the pairing $\langle p , q \rangle$
is defined  by
$$  \inpro{p,q} := (p(D)q)(0). $$
We now describe the pertinent result from \cite{HR}:

\goodbreak
\begin{theorem}[{\cite[Theorem~4.10]{HR}}]\label{th:explus}
\hfill
\begin{enumerate}
\item $\dim\CP_+(X)=\dim\CD_+(X)=\#\I(X)$.
\item $\CD_+(X)=\Pi(V_+)$, with $V_+$ as above.
\item $\CJ_+(X)\oplus\CP_+(X)=\Pi$.
\item $\CP_+(X)=\ker\CI_+(X)$.
\item The pairing
$\inpro{\cdot,\cdot}$ defines an isomorphism  between $\CP_+(X)$ and
the dual $\CD_+(X)^\ast$ of $\CD_+(X)$.
\item The polynomials $\{Q_I: I\in \I(X)\}$ (that depend on the
order $\prec$) form a basis for $\CP_+(X)$ (whose definition is
independent of that order). In particular, the Hilbert series of
$\CP_+(X)$ is determined by  $$h_+(j):=h_{+,X}(j):=\#\{I\in \I(X):
\val(I)=j\}.$$
\end{enumerate}

\end{theorem}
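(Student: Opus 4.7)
The six assertions are tightly interlocked, and the plan is to establish them together via three raw bounds and a duality argument. The three bounds are: $\CP_+(X)\subset\ker\CI_+(X)$; the inclusion $\Pi(V_+)\subset\CD_+(X)$ together with $\dim\Pi(V_+)=\#\I(X)$; and $\dim\CP_+(X)\le\#\I(X)$ via spanning. The duality step uses the pairing $\inpro{\cdot,\cdot}$ to force $\dim\CP_+(X)\ge\dim\CD_+(X)$, which sandwiches all three dimensions at $\#\I(X)$ and collapses the inequalities to equalities.

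First I would verify $\CP_+(X)\subset\ker\CI_+(X)$ by applying each generator $p_{\eta_F}^{m(F)+1}(D)$ of $\CI_+(X)$ to a short product $p_Y$, $Y\subset X$: the Leibniz rule expands the result as a sum of scalar products $\prod(\eta_F\cdot y)$ over choices of $m(F)+1$ factors from $Y$, and since at most $m(F)$ elements of $Y$ lie outside $F$, each summand contains a vanishing factor. Next I would verify $\Pi(V_+)\subset\CD_+(X)$ and compute its dimension: the map $I\mapsto\ex(I)$ is injective because $\ex(I)\cap X=I$, so $\#V_+=\#\I(X)$; Theorem~\ref{th:pi}(1) gives $\dim\Pi(V_+)=\#V_+$; and Theorem~\ref{th:pi}(2), applied to $X'$ with $\B':=\{\ex(I):I\in\I(X)\}$, gives $\Pi(V_+)\subset\ker\CJ_+(X)=\CD_+(X)$.

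For the upper bound $\dim\CP_+(X)\le\#\I(X)$, the plan is to show $\{Q_I:I\in\I(X)\}$ spans $\CP_+(X)$. Since $\CP_+(X)$ is the span of all $p_Y$ with $Y\subset X$, it suffices to rewrite each such $p_Y$ as a combination of $Q_I$'s. Using the order $\prec$, I would locate the $\prec$-largest $y\in Y$ that lies in $\spam\{z\in Y:z\prec y\}$, expand $p_y=\sum_z c_z p_z$, substitute into $p_Y$, and induct on a lexicographic complexity measure of $Y$; the base case $Y=X(I)$ for some $I\in\I(X)$ gives $p_Y=Q_I$.

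The last and hardest step is to close the loop by showing that the pairing induces an injection $\CP_+(X)\hookrightarrow\CD_+(X)^\ast$. This sandwiches $\#\I(X)=\dim\Pi(V_+)\le\dim\CD_+(X)\le\dim\CP_+(X)\le\#\I(X)$, proving Parts~1, 2, 5, and Part~6 (a spanning set of the correct size is a basis). Part~4's reverse inclusion then follows by dimension count. Part~3 follows because any $p\in\CP_+(X)\cap\CJ_+(X)$ annihilates $\CD_+(X)$, hence maps to $0$ in $\CD_+(X)^\ast$ and so vanishes by injectivity; together with $\dim\CP_+(X)=\codim\CJ_+(X)$ this yields the direct-sum decomposition. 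To build the injection I would construct, for each $I\in\I(X)$, an element $f_I\in\Pi(V_+)$ satisfying $\inpro{Q_J,f_I}=\delta_{IJ}$ up to a nonzero scalar, taking $f_I$ to be $g\least$ for a carefully chosen combination $g=\sum_J c_J\,e_{\Cb(\ex(J))}$. Arranging triangularity with respect to $\prec$ is the main obstacle: it requires a delicate analysis of how the greedy completion $\ex$ interacts with differentiation of the short products $Q_J$, and this is the non-trivial combinatorial content of the theorem.
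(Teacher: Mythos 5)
Note first that the paper itself does not prove this statement --- it is quoted from \cite{HR} --- so the natural in-paper benchmark is the proof of its generalization, Theorem~\ref{thm 2.6}. Your three ``raw bounds'' are, in themselves, in the right spirit and two of them match that proof: the Leibniz argument for $\CP_+(X)\subset\ker\CI_+(X)$, and the application of Theorem~\ref{th:pi} to $X'$ with $\B'=\{\ex(I):I\in\I(X)\}$, giving $\Pi(V_+)\subset\CD_+(X)$ and $\dim\Pi(V_+)=\#\I(X)$, are exactly right. (Your spanning step is shakier than you indicate: substituting $p_y=\sum_z c_zp_z$ with $z\in Y$ produces a repeated factor $p_z^2$, which need not be a product $p_{Y'}$ over a submultiset $Y'\subset X$, so the rewriting can leave $\CP_+(X)$; the paper instead reduces to the known central-case basis on the flat $\spam(X\setminus Y)$.)

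The genuine gap is in the ``sandwich.'' An injection $\CP_+(X)\hookrightarrow\CD_+(X)^\ast$ yields $\dim\CP_+(X)\le\dim\CD_+(X)$, not the inequality $\dim\CD_+(X)\le\dim\CP_+(X)$ that your chain requires. Moreover, a biorthogonal system $\inpro{Q_J,f_I}=\delta_{IJ}$ with $f_I\in\Pi(V_+)$, even if constructed, only controls the pairing of $\CP_+(X)$ against the subspace $\Pi(V_+)\subset\CD_+(X)$; it says nothing about a hypothetical $q\in\CD_+(X)\setminus\Pi(V_+)$, so it cannot bound $\dim\CD_+(X)=\codim\CJ_+(X)$ from above. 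The missing ingredient, which the paper supplies by a separate induction, is the identity $\CP_+(X)+\CJ_+(X)=\Pi$: in the proof of Theorem~\ref{thm 2.6} this is obtained by writing $f=c+\sum_{b\in\ex(I)}c_bp_bf_b$ and inducting on $\#(X\setminus Y)$, and it is this identity that forces $\codim\CJ_+(X)\le\dim\CP_+(X)$ and lets the duality argument (decompose $q=f+p$ and note $\inpro{q,q}=\inpro{q,p}\ne0$) go through. Without it, Parts 1--3 and 5 do not close. Separately, your Part 4 ``by dimension count'' presupposes $\codim\CI_+(X)\le\#\I(X)$; nothing in your proposal establishes this upper bound (it is a substantive fact, equivalent to the count of connected components of the arrangement complement), so the reverse inclusion $\ker\CI_+(X)\subset\CP_+(X)$ is also unproven. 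In short, two independent upper bounds --- on $\codim\CJ_+(X)$ and on $\codim\CI_+(X)$ --- are missing, and the duality step as designed cannot produce either.
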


Under an additional assumption that $X$ is unimodular, the results in
\cite{HR} also draw  a connection between the integer points
$\CZ(X)$ in the closed zonotope $Z(X)$, and the external zonotopal
spaces, viz.\,
$$\CP_+(X)\,\,=\,\,\Pi(\CZ(X)).$$

%%%%%%%%%%%%%%%%%%%%%%%%%%  SEMI-EXTERNAL ANALYSIS  **********************
\subsection{\label{sec:analysis-ext}Introduction and analysis of semi-external zonotopal spaces}
%%%%%%%%%%%%%%%%%%%%%%%%%%%%%%%%%%%%%%%%%%%%%%%%%%%%%%%%%%%%%%%%%%%%%%%%%%

The central zonotopal spaces  are defined with respect to the set of
all bases $\B(X)$. The external ones are defined with respect to the
independent sets $\I(X)$. The  semi-external spaces we now introduce
are defined by selecting  a set $\I'$ in between: $$\B(X)\subset
\I' \subset \I(X).$$

\begin{definition}
Let $\I'\subset \I(X)$. We say that $\I'$ is {\bf semi-external} if
the following holds:
$$\hbox{If $I\in \I'$, $I'\in\I(X)$  and $\spam I\subset \spam I'$,
then $I'\in \I'$.}$$
\end{definition}
Note that a (non-empty) semi-external set $\I'$ must contain $\B(X)$.

Our goal is to define and analyse the zonotopal ideals and the zonotopal spaces
that are associated with a semi-external collection $\I'$. To this end, we denote
$$S(X,\I'):=\{\spam I: I\in \I(X)\setminus \I'\},$$
and define

\begin{definition}
\begin{eqnarray*}
 \CI_+(X,\I')\,&:=&\,\CI_+(X)+\Ideal\{\Pi^0_{\#(X\setminus
S)}(S{\perp}): S\in S(X,\I')\},\\
 \CJ_+(X,\I')\,&:=&\,\Ideal\{p_Y:
Y\subset X',\ Y\cap \ex(I)\not=\emptyset,\
\forall I\in \I' \},\\
\CP_+(X,\I')\,&:=&\,\spam\{p_Y:\ Y\subset X, \, Y\cap I=\emptyset\
\hbox{for some $I\in \I'$}\},\\
\CD_+(X,\I')\, &:=&\, \ker \CJ_+(X,\I').
\end{eqnarray*}
\end{definition}

\noindent To be sure, the polynomial space $\Pi^0_j(S{\perp})$
consists of homogeneous polynomials of degree $j$ on the orthogonal
complement  of $S$ in $\Rn$. Equivalently,
$\Pi(S{\perp}):=\{p\in\Pi\; : \;D_\eta p=0 \;\; \hbox{\rm for all}
\;\; \eta\in S\}$.
%amos

\smallskip
We note that the semi-external spaces capture the external ones and
the central ones as special cases. In the case $\I'=\I(X)$, the
$\I'$-ideals $(\CI_+(X,\I'),\CJ_+(X,\I'))$ coincide with the external
ideals. In the case $\I'=\B(X)$, they coincide with the central ideals.

Here is a simple example showing how the  space $\CP_+(X,\I')$ and
the ideal $\CI_+(X,\I')$ change depending on the choice of our
special collection $\I'$ of independent subsets.
%Throughout the
%example, $e_j$ denotes the $j$th standard  basis vector in $\R^n$
%($n=3$).

\begin{example}
$$ X \quad = \quad
\begin{bmatrix}
\,1 \,& \,0 \,& \,0\, & 1 \, \\
\,0 \,& \,1 \,& \,0 \,&  1\,\\
\,0\, & \,0 \,& \,1 \,& 1 \, \\
\end{bmatrix}=:[x_1,x_2,x_3,x_4]
$$
If
$$
\I' \,=\,\{[x_1,x_2],[x_1,x_3],[x_1,x_4]\}\cup \B(X),
$$
then
\begin{eqnarray*}
\CP_+(X,\I') &\,=\,& \spam\{1,p_{x_1},p_{x_2},p_{x_3},p_{x_2}p_{x_3},
p_{x_3}p_{x_4},p_{x_2} p_{x_4}  \},\\
\CI_+(X,\I') &\,=\,&
\Ideal\{p_{x_3}^3,p_{x_2}^3,p_{x_2-x_3}^3,p_{x_1}^2,
p_{x_1-x_3}^2,p_{x_1-x_2}^2,p_{x_3}^2 p_{x_2} \}.
\end{eqnarray*}
 If
$$
\I' \,=\,\{[x_1], [x_1,x_2],[x_1,x_3],[x_1,x_4]\}\cup \B(X),
$$
then
\begin{eqnarray*}
\CP_+(X,\I') &\,=\,& \spam\{1,p_{x_1},p_{x_2},p_{x_3},p_{x_2}p_{x_3},
p_{x_3}p_{x_4},p_{x_2} p_{x_4}, p_{x_2} p_{x_3} p_{x_4}  \},  \\
\CI_+(X,\I') &\,=\,&
\Ideal\{p_{x_3}^3,p_{x_2}^3,p_{x_2-x_3}^3,p_{x_1}^2,
p_{x_1-x_3}^2,p_{x_1-x_2}^2  \}.
\end{eqnarray*}
\end{example}

\begin{theorem}\label{thm 2.6}
\hfill
\begin{description}
\item{1.} The polynomials $(Q_I)_{I\in \I'}$ form a basis for $\CP_+(X,\I')$.
In particular, $\dim \CP_+(X,\I')=\# \I'$. The Hilbert series of
 $\CP_+(X,\I')$ is determined by these polynomials:
$$h_{+,\I'}(j):=\dim(\CP_+(X,\I')\cap \Pi_j^0)=\#\{I\in \I': \val(I)=j\}.$$

\item{2.} Let $V(\I')$ be the vertices in the arrangement $\CH(X',\lam)$
that correspond to $\ex(\I')$. Then
$\CD_+(X,\I')=\Pi(V(\I')).$ In particular, $\dim\CD_+(X,\I')=\#\I'$.

\item{3.} $\CJ_+(X,\I')\oplus\CP_+(X,\I')=\Pi$.

\item{4.} $\CP_+(X,\I')=\ker\CI_+(X,\I')$.

\item{5.} The map $p\mapsto \inpro{p, \cdot}$ is an isomorphism between
$\CP_+(X,\I')$ and $\CD_+(X,\I')^\ast$.
\end{description}
\end{theorem}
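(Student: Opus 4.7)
The plan is to establish the five assertions in the order (1), (2), (5), (3), (4), bootstrapping from the external case (Theorem~\ref{th:explus}) and the least-map duality (Theorem~\ref{th:pi}), with the non-degenerate pairing $\inpro{p,q}=(p(D)q)(0)$ serving as the main technical tool throughout.

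For Part 1, membership $Q_I\in\CP_+(X,\I')$ for $I\in\I'$ is immediate because $X\setminus X(I)\supseteq I$, and linear independence of $\{Q_I\}_{I\in\I'}$ is inherited from $\{Q_I\}_{I\in\I(X)}$ being a basis of $\CP_+(X)$ by Theorem~\ref{th:explus}(6). The substantive step is spanning: given a generator $p_Y$ of $\CP_+(X,\I')$ with $Y\cap I=\emptyset$ for some $I\in\I'$, I would expand $p_Y=\sum_{I'\in\I(X)}c_{I'}Q_{I'}$ in the external basis and argue that every $I'$ with $c_{I'}\ne 0$ satisfies $\spam I'\supseteq \spam I$; the semi-external closure of $\I'$ then forces $I'\in \I'$. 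This subspace-triangularity is the main obstacle, and I would prove it by induction on $|Y|$ combined with the subspace-lattice order, exploiting the greedy-completion definitions of $\ex(\cdot)$ and $X(I')$ underlying the proof of Theorem~\ref{th:explus}(6). The Hilbert-series formula then follows from $\deg Q_I=\val(I)$.

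For Parts 2 and 5, note that $V(\I')\subseteq V(X',\lam)$ and that by construction $\CJ_{\ex(\I')}(X')=\CJ_+(X,\I')$, so Theorem~\ref{th:pi} yields both $\Pi(V(\I'))\subseteq\CD_+(X,\I')$ and $\dim \Pi(V(\I'))=\#\I'$. To upgrade this inclusion to equality and obtain the duality in Part 5 simultaneously, I will show that the pairing map $\CP_+(X,\I')\to\CD_+(X,\I')^\ast$ is injective: if $p\in \CP_+(X,\I')$ pairs to zero against all of $\Pi(V(\I'))$, then a standard least-map identity shows that $p$ vanishes on the pointset $V(\I')$, and an evaluation argument built on the basis $\{Q_I\}_{I\in\I'}$ from Part 1 forces $p=0$. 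Combined with $\dim \CP_+(X,\I')=\#\I'\le \dim \CD_+(X,\I')$ this pins down the equality $\CD_+(X,\I')=\Pi(V(\I'))$ and promotes the pairing to an isomorphism.

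For Part 3, injectivity of the pairing gives $\CJ_+(X,\I')\cap\CP_+(X,\I')=\{0\}$, and a degree-wise Hilbert-series comparison (using Part 1 and the graded splitting of $\Pi$ into short and long polynomials, as in \cite{HR}) yields $\CJ_+(X,\I')+\CP_+(X,\I')=\Pi$. For Part 4, the inclusion $\CP_+(X,\I')\subseteq \ker \CI_+(X,\I')$ splits into two checks: the generators of $\CI_+(X)$ kill $\CP_+(X,\I')\subseteq\CP_+(X)=\ker \CI_+(X)$ by Theorem~\ref{th:plus}, and each additional generator $q\in \Pi^0_{\#(X\setminus S)}(S{\perp})$ annihilates a generator $p_Y$ by a short computation: the semi-external property forces $I\not\subseteq S$ for the witness $I\in \I'$ with $Y\cap I=\emptyset$ (else $\spam I\subseteq S=\spam I_S$ would yield $I_S\in \I'$, contradicting $S\in S(X,\I')$), so $|Y\setminus S|<\#(X\setminus S)$ and $p_Y$ has too few factors in $S{\perp}$-directions to survive $q(D)$. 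The reverse inclusion follows from Part 3 together with a dimension count: the extra generators cut $\ker\CI_+(X)=\CP_+(X)$ down from $\#\I(X)$ to at most $\#\I'$ dimensions, as monitored by the external basis $\{Q_I\}_{I\in\I(X)}$ and the fact that the extra generators can be used to detect each $I'\in\I(X)\setminus\I'$ separately at its corresponding subspace $S=\spam I'$; this matches $\dim\CP_+(X,\I')$ from Part~1.
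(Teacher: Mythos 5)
Your plan for Part 1 and for the forward inclusion $\CP_+(X,\I')\subset\ker\CI_+(X,\I')$ is essentially the paper's argument (the paper proves the spanning step not by induction on $|Y|$ but directly, factoring $p_Y=p_{X\setminus S}\,p_{Y\cap S}$ with $S=\spam(X\setminus Y)$ and expanding $p_{Y\cap S}$ in the central basis of $\CP(X\cap S)$, which immediately gives the triangularity you want). But the rest of the proposal has two genuine gaps. First, the ``standard least-map identity'' you invoke for Parts 2 and 5 is false: pairing to zero against $\Pi(V)$ does \emph{not} imply vanishing on $V$. Take $V=\{v\}$ with $v\neq 0$; then $\Pi(V)=\spam\{1\}$, and any $p$ with $p(0)=0$ but $p(v)\neq 0$ pairs to zero against all of $\Pi(V)$. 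The relation $\inpro{p,e_v}=p(v)$ holds for the exponentials, not for their least terms, so your injectivity argument for the pairing collapses, and with it your derivations of Parts 2, 3 and 5. Second, the crux of the whole theorem --- the identity $\CP_+(X,\I')+\CJ_+(X,\I')=\Pi$ --- is never proved: your ``degree-wise Hilbert-series comparison'' presupposes knowledge of $\codim\CJ_+(X,\I')$, which is exactly what is at stake (Theorem~\ref{th:pi} only gives the inequality $\codim\CJ_+(X,\I')\ge\#\I'$, in the wrong direction). The paper's route is the reverse of yours: it first proves $\CP_+(X,\I')+\CJ_+(X,\I')=\Pi$ by reducing to $\CJ(X)\subset\CP_+(X,\I')+\CJ_+(X,\I')$ and running an induction on $\#(X\bks Y)$, writing $f=c+\sum_{b\in\ex(I)}c_bp_bf_b$ and checking separately the constant term, the terms with $b\in I$, and the terms with $b\in B_0$ (where the semi-external hypothesis enters via $\ex(I)\cap B_0\subset\ex(I')\cap B_0$ whenever $\spam I'\subset\spam I$). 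Only then do the dimension count $\#\I'\le\dim\CD_+(X,\I')=\codim\CJ_+(X,\I')\le\dim\CP_+(X,\I')=\#\I'$ and Parts 2, 3, 5 fall out.

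The reverse inclusion in Part 4 is also not established by your sketch. The claim that ``the extra generators can be used to detect each $I'\in\I(X)\setminus\I'$ separately at $S=\spam I'$'' is precisely the hard point: a single generator $q\in\Pi^0_{\#(X\setminus S)}(S\perp)$ does not annihilate exactly the span of $\{Q_I: I\not\subset S\}$, and one cannot simply count dimensions. The paper's argument takes $f=\sum_{I\in\I(X)\setminus\I'}c(I)Q_I\in\ker\CI_+(X,\I')$, selects $I'$ with $c(I')\neq 0$ of minimal cardinality, builds a flag $S_0=\spam I'\subset S_1\subset\cdots\subset S_k=\Rn$ and the operator $q=\prod_i p_{\eta_i}^{m_i}$ with $\eta_i\in S_i\cap S_{i-1}^\perp$, and shows that $q(D)f$ collapses to a linear combination of the central basis of $\CP(X\cap S_0)$, whose independence (together with the minimality of $\#I'$) forces $c(I')=0$. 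Some such construction is indispensable; without it, and without a correct proof of the sum decomposition, the proposal does not close.
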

\begin{proof}
1.\/ We first prove that $\{Q_I:I\in \I'\}$ is a basis for
$\CP_+(X,\I')$.

The fact that $Q_I\in \CP_+(X,\I')$ for every $I\in \I'$ is trivial since
$Q_I=p_{X(I)}$, with $X(I)\cap I=\emptyset$. Now, we choose $I'\in \I'$,
and $Y\subset X\setminus I'$. According to the definition  of $\I'$, since
$\spam I'\subset S:=\spam (X\setminus Y)$, any basis
$I\subset X\setminus Y$ for $S$ must lie in $\I'$. We set
$\tilde{X}:=X\cap S$. Now, with $\{ \tilde{Q}_B:=p_{\tilde{X}(B)}:
B\in \B(\tilde{X})\}$ the homogeneous basis for $\CP(\tilde{X})$
(per the fixed order we chose for $X$) \cite{DR90},
we have
\begin{eqnarray*}
p_Y\in p_{X\setminus S}\cdot \CP(\tilde{X})&=&p_{X\setminus
S}\cdot \spam\{\tilde{Q}_B:
B\in \B(\tilde{X})\}\\
& =&\spam\{Q_{I}: I\in \B(\tilde{X})\}\subset \spam\{Q_I: I\in \I' \},
\end{eqnarray*}
with the last inclusion following from the fact that every basis for
$S$ from $X$ must lie in $\I'$,  since $\I'$ is semi-external. Thus,
$$\CP_+(X,\I')\subset \spam\{Q_I: I\in \I' \}.$$
We conclude that the two spaces coincide and
$$\dim\CP_+(X,\I')=\#\I'.$$

\smallskip
\noindent 2. and 3. : \/  Note that $\{\ex(I):I\in \I' \}\subset
\B(X')$. We first apply Theorem \ref{th:pi} to this case (i.e.,
with $X$ there replaced by $X'$ and $\B'$ being our $\{\ex(I):I\in
\I' \}$). The theorem thus tells us that
$$
\Pi(\CV(\ex(\I')))\subset \CD_+(X,\I'),
$$
and that $\dim\CD_+(X,\I')\geq \# \I'$.
%amos

We next claim that
$\CP_+(X,\I')+\CJ_+(X,\I')=\Pi$.  Once we prove this claim, we will have that
$\dim\CD_+(X,\I')=\codim\CJ_+(X,\I')\le\dim\CP_+(X,\I')=\#\I'$. This will yield that
$\dim\CD_+(X,\I')=\#\I'$, hence that
$\CD_+(X,\I')=\Pi(V(\I'))$. The same will also yield that $\codim\CJ_+(X,\I')=\#\I'$,
hence that the sum $\CP_+(X,\I')+\CJ_+(X,\I')$ is direct (since $\dim\CP_+(X,\I')=\#\I'$,
too). In summary, the proofs of (2) and (3) will be completed once we show
that $\CP_+(X,\I')+\CJ_+(X,\I')=\Pi$, as we do now.

Since we know that $\CP(X)\oplus \CJ(X)=\Pi$ (see \cite{HR}; the
result was first proved in \cite{AS88,DR90}), we conclude from the
fact that $\CP(X)\subset \CP_+(X,\I')$, that
$\CP_+(X,\I')+\CJ(X)=\Pi$. We prove now that
$$\CJ(X)\, \subset\, \CP_+(X,\I')+\CJ_+(X,\I').$$
A polynomial in $\CJ(X)$ is a linear combination of polynomials of
the form $p_Y f$,  where $f\in\Pi$, $Y\subset X$ and
$\rank(X\setminus Y)<n$. Let $I\subset X\setminus Y$ be a basis for
$\spam X\setminus Y$. If $I\not\in \I'$, then, since $\I'$ is semi-external,
$X\setminus Y$ contains no element  in $\I'$, hence $p_Y\in
\CJ_+(X,\I')$, {\it a fortiori\/} $p_Yf\in \CJ_+(X,\I')$, and we are done.
Otherwise, $I\in \I'$. We prove this case by induction on $\#(X\bks Y)$.
Thus, we assume that $p_{Y'}
f\in \CP_+(X,\I')+\CJ_+(X,\I')$ whenever $\#(X\setminus Y')=k\ge -1$ and consider
$p_Y f$ where $\#(X\setminus Y)=k+1$ (the initial case $X=Y$ corresponds
to the choice $k=-1$).

The proof goes as follows:
we denote $B:=\ex(I)$, and, using the fact that
$$\Pi=\Pi_0^0+\Ideal\{p_b: b\in B\},$$ write
$$
f=c+\sum_{b\in B}c_b p_b f_b,
$$
with $c$ and $(c_b)_{b\in B}$ some scalars, and $(f_b)_{b\in B}$
some polynomials. Then
$$p_Yf=cp_Y+\sum_{b\in B}c_b p_{Y\sqcup b}f_b.$$
We show that each of the terms on the right-hand-side lies in
$\CP_+(X,\I')+\CJ_+(X,\I')$.

Starting with $p_Y$, we note that $Y\cap I=\emptyset$ and that $I\in \I'$,
hence that $p_Y\in \CP_+(X,\I')$.  We then consider the term $p_{Y\sqcup b}f_b$
under the assumption that $b\in I$. In this case,
$Y':=Y\sqcup b\subset X$ and $\#(X\setminus Y')=k$, and hence the induction
hypothesis applies to yield that
$p_{Y\sqcup b}f_b\in \CP_+(X,\I')+\CJ_+(X,\I')$.

Finally, we treat the summand $p_{Y\sqcup b}f_b$  under the assumption
that $b\in B_0$. Let
$I'\in \I(X)$, and assume that $Y\cap I'=\emptyset$. Then $I'\subset X\bks
Y$, hence $\spam I'\subset \spam I$, and therefore $\ex(I)\cap B_0\subset \ex(I')
\cap B_0$. Consequently, $b\in \ex(I')$. In summary, the set $Y\sqcup b$
intersects every extended basis $\ex(I')$, $I'\in \I'$, and this means that
$p_{Y\sqcup b}f_b$ lies in the  ideal $\CJ_+(X,\I')$.

\smallskip
\noindent 4. \/ We now prove that $\CP_+(X,\I')=\ker
\CI_+(X,\I')$. We first show that $\CP_+(X,\I')\subset \ker
\CI_+(X,\I')$. To this end, choose $Y\subset X$ such that
$X\setminus Y$ contains a set $I\in \I'$. We need to show that
$p_Y$ is annihilated by each of the generators of the ideal
$\CI_+(X,\I')$. The fact that $p_Y$ is annihilated by $\CI_+(X)$
follows from the fact that
$\ker\CI_+(X)=\CP_+(X)\supset\CP_+(X,\I')$ by
Theorem~\ref{th:explus}.

We now deal with a differential operator $q(D)$, $q\in \Pi^0_{\#(X\setminus
S)}(S{{\perp}})$, $S\in S(X,\I')$. Note that
$$q(D)p_Y=p_{Y\cap S}\;q(D)p_{Y\setminus S}.$$
Now, $\#(X\setminus S)=\deg q$, and $Y\bks S\subset X\bks S$. Therefore,
we just need to rule out the possibility that
$Y\setminus S= X\setminus S$. Indeed, if $Y\setminus S= X\setminus S$ then
$X\setminus Y\subset S$, and hence $I\subset S$. Since $I\in \I'$ and $\I'$ is
semi-external, this implies that every basis for $S$ from $X$ is in $\I'$,
contradicting thereby the assumption that $S\in S(X,\I')$.
Hence $\#(Y\setminus S)<\deg q$, and we obtain that $q(D)p_Y=0$.
Consequently, $\CP_+(X,\I')\subset\ker \CI_+(X,\I')$.

\smallskip
Proving the converse inclusion is somewhat harder. First, since
$\CI_+(X)\subset \CI_+(X,\I')$ directly from the definition of
$\CI_+(X,\I')$, we have that $\ker\CI_+(X,\I')\subset \ker
\CI_+(X)=\CP_+(X)$ (cf. Theorem~\ref{th:explus}). In addition, since
$(Q_I)_{I\in \I'}$ is a basis for $\CP_+(X,\I')$, while $(Q_I)_{I\in
\I(X)}$ is a basis for $\CP_+(X)$, we have that
$$\CP_+(X,\I')\subset \ker\CI_+(X,\I')\subset \ker \CI_+(X)=\CP_+(X)=\CP_+(X,\I')+\Q_{\I'},$$
with
$$\Q_{\I'}:=\spam\{Q_I: I\in \I(X)\setminus \I'\}.$$
We show now that $\Q_{\I'}\cap\ker\CI_+(X,\I')=0$, and this will complete
the proof.

To this end, let
$$
 f:=\sum_{I\in \I(X)\setminus \I'}c(I)Q_I\in \ker \CI_+(X,\I').
$$
We will show that $c(I)=0$ for all $I\in \I(X)\setminus \I'$. Assume,
to the contrary, that $c(I')\not=0$ for some $I'\in \I(X)\setminus \I'$,
and assume, without loss of generality, that $c(I)=0$ for every
$I\in \I(X)\setminus \I'$ of smaller cardinality. $I'$ cannot be a basis
in $\B(X)$, since it is not in $\I'$. We therefore extend $I'$ to a
basis $B'\in \B(X)$ using the vectors $(b_1,\ldots,b_k)\subset X$, $k\ge 1$,
and denote
$$S_0:=\spam\, I',\quad S_i:=\spam \left(I'\cup\{b_1,\ldots,b_i\}\right),
\quad i=1,\ldots, k.$$
%amos
For each $1\le i\le k$, we choose  vector $0\not=\eta_i\in S_i$, such
that $\eta_i\perp S_{i-1}\supset S_0$. Setting $m_i:=\#((X\cap
S_i)\setminus S_{i-1})$, $i=1,\ldots,k$, we define
$$
q\,:=\,\prod_{i=1}^kp_{\eta_i}^{m_i}.
$$
Since $\eta_i\perp S_0$ for every $i$, and since $\sum_{i=1}^k
m_i=\#(X\setminus S_0)$, we conclude that $q\in \Pi^0_{\#(X\setminus
S_0)}(S_0{\perp})$, which implies that $q\in \CI_+(X,\I')$ and that
$q(D)f=0$.

Now, for $Y\subset X$,
$$
q(D)p_Y\,=\,
p_{\eta_{1}}^{m_{1}}(D)p_{(Y\cap S_{1})\bks S_0}
p_{\eta_{2}}^{m_{2}}(D)p_{(Y\cap S_{2})\bks S_{1}}
\ldots
p_{\eta_{k-1}}^{m_{k-1}}(D)p_{(Y\cap S_{k-1})\bks S_{k-2}}
p_{\eta_k}^{m_k}(D)p_{Y\setminus S_{k-1}}.
$$
Since $\#(X\cap S_j)\bks S_{j-1}=m_j$, $1\le j\le k$,
we have $q(D)Q_I\neq 0$ only if $X\setminus S_{0}=X(I)\bks S_{0}$,
which is possible only if $I\subset S_{0}$ (since $I\cap X(I)=\emptyset$),
in which case
$$
q(D)Q_I=ap_{X(I)\cap S_0},
$$
for some $a\not=0$. Thus, with
%amos
$$
{\rm Sub}(S_0):=\{I\in \I(X)\setminus \I':  I\subset S_0\},
$$
we have
$$
0=q(D)f=a\sum_{I\in {\rm Sub}(S_0)}c(I)p_{X(I)\cap S_0}.
$$
%amos
Per our assumption on the minimal cardinality of $I'$, we have that
$c(I)=0$ whenever $I\in {\rm Sub}(S_0)$ and $\#I<\#I'$. However, the polynomial
set $\{p_{X(I)\cap S_0}:I\in \I(X),  \spam I=S_0\}$ is a basis for
the central space $\CP(X\cap S_0)$, hence we conclude that $c(I)=0$,
for every $I\in {\rm Sub}(S_0)$. In particular, $c(I')=0$. \smallskip

\noindent 5. \/  Pick $q\in \CD_+(X,\I')\setminus\{ 0\}$. Since
$\CJ_+(X,\I')\oplus\CP_+(X,\I')=\Pi$, $q$ can be written in the
form of $q=f+p $ where $f\in \CJ_+(X,\I')$ and $p\in
\CP_+(X,\I')$. $\CD_+(X,\I')=\ker\CJ_+(X,\I')$ implies that
$\inpro{q,f}=0$. We conclude that $\inpro{q,p}=\inpro{q,q}\neq 0$,
since $q\neq 0$. This means that there exists no $q\in
\CD_+(X,\I')\setminus \{0\}$ that satisfies
$$
\inpro{q,p} =0 , \,\,\,\, \forall p\in \CP_+(X,\I').
$$
%amos
The result follows from the fact that $\dim\CP_+(X,\I')=\dim\CD_+(X,\I')$.
\end{proof}

\smallskip
One observes that the definition of the ideal $\CI_+(X,\I')$
involves more than the powers of the normals to the facet
hyperplanes. We now investigate a special situation where the set
$\I'$ satisfies an additional condition, and show that
$\CI_+(X,\I')$ is generated then by  powers of the normals to the
hyperplanes and by nothing else. Precisely, we will assume that an
independent subset $I_0$ necessarily lies in $\I'$ whenever all
its  extensions to a set of rank $n-1$ lie in $\I'$.
%$$
%K_{\min}:=\{I: I\in K, \,\, (K\cap 2^{I})\setminus I=\emptyset \}.
%$$
 %amos
We will use, for $I\in \I(X) \setminus \B(X)$, the notation
$$
\CM(I)\, := \, \{I'\in \I(X) : I\subset\spam(I')\in \CF(X)\}.
$$
Also,
$$
\CI_\varepsilon(X,\I'):=\Ideal\{p_{\eta_F}^{m(F)+\varepsilon(F)} :  F\in \CF(X)\},
$$
where $\varepsilon(F) := 1 \mbox{ if } F = \spam(I) \mbox{ for
some } I\in \I'$, or else $\varepsilon(F) := 0$.

For the proof of our  Theorem \ref{th:28} below,
we will need  the following proposition:

\begin{proposition}[{\cite[Proposition~4.8]{HR}}]\label{pr:21} Let $\CI$ be a
polynomial ideal and let $V$ be a subspace of $\Rn$ of dimension
$d\geq 2$. Let $V_1,\ldots,V_k$ be distinct subspaces of $V$, each
of dimension $d-1$. Suppose that, for $n_1,\ldots,n_k\in \N$, the
ideal $\CI$ contains all homogeneous polynomials defined on $V_i$
of degree $n_i$:
$$
\Pi^0_{n_i}(V_i)\subset \CI.
$$
Then
$$
\Pi^0_N(V)\subset \CI\,\,\,\, \mbox{ whenever }\,\,\,\,
(N+1)(k-1)\geq \sum_{i=1}^k n_i.
$$
\end{proposition}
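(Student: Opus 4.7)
The plan is to reduce to the local statement inside $\Pi(V)$: it suffices to show that the ideal $J$ of $\Pi(V)$ generated by $\bigcup_{i=1}^k \Pi^0_{n_i}(V_i)$ already contains $\Pi^0_N(V)$, since $J\subset\CI$. Working in the polynomial ring $\Pi(V)$ (in $d$ variables), the degree-$N$ piece of $J$ is precisely the image of the multiplication map
$$
\phi \,:\, \bigoplus_{i=1}^k \Pi^0_{N-n_i}(V) \otimes \Pi^0_{n_i}(V_i) \,\longrightarrow\, \Pi^0_N(V),
$$
so the goal becomes surjectivity of $\phi$.

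I would then pass to the dual picture via the apolarity pairing $\inpro{p,q}=(p(D)q)(0)$, which is nondegenerate on every homogeneous piece of $\Pi(V)$ and of each $\Pi(V_i)$. Surjectivity of $\phi$ is then equivalent to injectivity of its adjoint $\phi^*$, and unpacking the pairing gives
$$
\phi^*(P)=0 \,\Longleftrightarrow\, p(D)P=0\ \text{ for every } p\in\Pi^0_{n_i}(V_i)\ \text{ and every } i.
$$

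The heart of the argument is to translate each condition ``$p(D)P=0$ for all $p\in\Pi^0_{n_i}(V_i)$'' into a concrete divisibility statement on $P$. Fix $i$, choose a linear form $\ell_i\in V^\ast$ vanishing on $V_i$ (for instance $\ell_i=p_{\eta_i}$, with $\eta_i$ a normal to $V_i$ inside $V$), and pick coordinates on $V$ in which $\ell_i$ is the last coordinate. A short expansion of $P$ in these coordinates shows that the differential condition annihilates every monomial of $P$ of $\ell_i$-degree at most $N-n_i$; equivalently, $\ell_i^{\,N-n_i+1}$ divides $P$ (vacuously so when $n_i>N$).

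Finally, since the $V_i$ are distinct codimension-$1$ subspaces of $V$, the linear forms $\ell_1,\ldots,\ell_k$ are pairwise nonproportional and hence pairwise coprime in $\Pi(V)$, so $P$ is divisible by $\prod_{i=1}^k \ell_i^{\max(0,\,N-n_i+1)}$. This divisor has degree at least $\sum_i(N+1-n_i)=k(N+1)-\sum_i n_i$, which under the hypothesis $(N+1)(k-1)\geq\sum_i n_i$ is at least $N+1>\deg P$. Divisibility by a polynomial of strictly larger degree forces $P=0$, so $\phi^*$ is injective, $\phi$ surjective, and the proposition follows. The main technical obstacle I anticipate is the coordinate calculation identifying the apolarity condition with the concrete divisibility by $\ell_i^{\,N-n_i+1}$; the rest is formal apolarity duality plus a B\'ezout-style degree count.
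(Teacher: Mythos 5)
Your argument is correct and complete in outline. One caveat on the comparison: this paper does not actually prove the proposition --- it imports it verbatim from \cite[Proposition~4.8]{HR} --- so there is no in-text proof to measure you against; I can only certify your route. That route (apolarity duality plus a B\'ezout-style degree count) is sound. The reduction to surjectivity of the multiplication map $\phi$ is standard for homogeneous ideals, and the dualization is valid because the pairing $\inpro{p,q}=(p(D)q)(0)$ restricts nondegenerately to each $\Pi^0_m(V)$ (choose orthonormal coordinates adapted to $V$) and because $p(D)P\in\Pi^0_{N-n_i}(V)$ whenever $P\in\Pi^0_N(V)$. The ``technical obstacle'' you flag does go through, but be explicit about the one point it hinges on: choose the coordinates on $V$ \emph{orthogonally}, with $u_1,\dots,u_{d-1}$ spanning $V_i$ and $u_d=\ell_i=p_{\eta_i}$ along the normal; then every $p\in\Pi(V_i)$ is a polynomial in $u_1,\dots,u_{d-1}$, $D_{u_m}\ell_i=\langle u_m,\eta_i\rangle=0$, so $p(D)$ commutes with multiplication by $\ell_i$. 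Writing $P=\sum_j \ell_i^{\,j}P_j$ with $P_j\in\Pi^0_{N-j}(V_i)$, the terms $\ell_i^{\,j}\,p(D)P_j$ have distinct $\ell_i$-degrees, hence vanish separately, and ``all order-$n_i$ derivatives of the homogeneous $P_j$ vanish'' forces $P_j=0$ whenever $N-j\ge n_i$; this is exactly the claimed divisibility by $\ell_i^{\,N-n_i+1}$. The coprimality of the pairwise nonproportional $\ell_i$ and the count $\sum_i\max(0,N+1-n_i)\ge k(N+1)-\sum_i n_i\ge N+1$ then kill $P$. The degenerate cases ($n_i=0$, or $n_i>N$, or $k=1$) are all absorbed by the $\max(0,\cdot)$ convention, as you note.
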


\begin{theorem}\label{th:28}
Suppose that the semi-external set $\I'$ satisfies the following
additional condition:
$$\hbox{for any $I\in \I(X)\setminus \B(X)$, $\CM(I)\subset \I'$
%amos
implies $I\in \I'$.}$$   Then
\begin{description}
\item[(1)] $\CI_+(X,\I')=\CI_\varepsilon(X,\I')$.
\item[(2)] $ \CP_+(X,\I')=\sum_{I \text{\scriptsize is minimal in }\I'}\left(\bigcap_{Z\in
\BC_I(X)}\CP(X\sqcup Z)\right) $. Here, $I$ is minimal in $\I'$
provided $I\in \I'$ and $(\I'\cap 2^{I})\setminus I=\emptyset$,
and $\BC_I(X)$ denotes all completions of $I$ to a basis:  $
\BC_I(X)\,:=\,\{Z\subset X: I\cup Z\in \B(X)\} $.
\end{description}
\end{theorem}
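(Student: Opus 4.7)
The plan is to prove the two parts in sequence, using the ideal $\CI_\varepsilon(X,\I')$ as a bridge and exploiting the identity $\CP_+(X,\I')=\ker\CI_+(X,\I')$ from Theorem~\ref{thm 2.6}(4). For Part~(1), I first dispatch the easy inclusion $\CI_\varepsilon(X,\I')\subset\CI_+(X,\I')$: when $\varepsilon(F)=1$ the generator $p_{\eta_F}^{m(F)+1}$ already lies in $\CI_+(X)\subset\CI_+(X,\I')$; when $\varepsilon(F)=0$, a basis $B_F\subset X\cap F$ of $F$ exists by definition of $\CF(X)$, and it cannot lie in $\I'$ (else $\spam B_F=F$ would give $\varepsilon(F)=1$), so $F\in S(X,\I')$ and $p_{\eta_F}^{m(F)}$ generates the one-dimensional space $\Pi^0_{m(F)}(F^\perp)\subset\CI_+(X,\I')$. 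For the reverse inclusion $\CI_+(X,\I')\subset\CI_\varepsilon(X,\I')$, it suffices to show $\Pi^0_{\#(X\setminus S)}(S^\perp)\subset\CI_\varepsilon(X,\I')$ for every $S\in S(X,\I')$, which I establish by induction on $n-\dim S$, the base case $\dim S=n-1$ being the computation just made.

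For the inductive step, let $d=\dim S$ and enumerate the distinct $(d+1)$-dimensional subspaces $T_1,\ldots,T_k$ of the form $\spam(S\cup\{x\})$ with $x\in X\setminus S$ (one checks $k\ge 2$ since $\rank X=n$ and $d\le n-2$). Then apply Proposition~\ref{pr:21} with $V=S^\perp$ and $V_i=T_i^\perp$. For $T_i\in S(X,\I')$, induction gives $\Pi^0_{m_X(T_i)}(T_i^\perp)\subset\CI_\varepsilon(X,\I')$; for the remaining $T_i$, the inclusion $\CI_+(X)\subset\CI_\varepsilon(X,\I')$ yields $\Pi^0_{m_X(T_i)+1}(T_i^\perp)\subset\CI_\varepsilon(X,\I')$. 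Using $\sum_i\#(X\cap T_i\setminus S)=m_X(S)$, one deduces $\sum_i m_X(T_i)=(k-1)m_X(S)$, and the counting hypothesis $(m_X(S)+1)(k-1)\ge\sum n_i$ reduces to requiring at least one $T_i\in S(X,\I')$. Here the extra condition is essential: pick $I\in\I(X)\setminus\I'$ with $\spam I=S$; its contrapositive yields $J\in\CM(I)\setminus\I'$; for any $y\in X\cap\spam J\setminus S$ set $T_0=\spam(S\cup\{y\})$, and note that $I\cup\{y\}\in\I'$ would, by semi-externality applied to $\spam(I\cup\{y\})\subset\spam J$, force $J\in\I'$, a contradiction; hence $I\cup\{y\}\notin\I'$ and $T_0\in S(X,\I')$. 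Securing this counting bound is the principal obstacle in Part~(1).

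For Part~(2), I invoke Part~(1) as $\CP_+(X,\I')=\ker\CI_\varepsilon(X,\I')$. To show $\CP_+(X,\I')$ is contained in the right-hand side, I place each basis element $Q_{I'}$, $I'\in\I'$ (Theorem~\ref{thm 2.6}(1)), into some summand: pick any minimal $I\subset I'$ lying in $\I'$ and observe that $X(I')\cap I\subset X(I')\cap I'=\emptyset$, so for every $Z\in\BC_I(X)$ the set $(X\sqcup Z)\setminus X(I')$ contains the basis $I\cup Z$, making $X(I')$ short in $X\sqcup Z$ and thus $Q_{I'}=p_{X(I')}\in\CP(X\sqcup Z)$. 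For the reverse inclusion, given $I$ minimal in $\I'$ and $p\in\bigcap_{Z\in\BC_I(X)}\CP(X\sqcup Z)$, the goal is to verify $p_{\eta_F}^{m(F)+\varepsilon(F)}(D)p=0$ for every $F\in\CF(X)$; it suffices to exhibit $Z\in\BC_I(X)$ with $\#(Z\setminus F)\le\varepsilon(F)$, for then the generator $p_{\eta_F}^{m(F)+\#(Z\setminus F)}\in\CI(X\sqcup Z)$ already annihilates $p$, and monotonicity in the exponent handles the remaining factor.

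Producing this $Z$ is the second main obstacle. When $\varepsilon(F)=0$, the case $\spam I\subset F$ is excluded by semi-externality (extending $I\subset X\cap F$ within $X\cap F$ to a basis $J$ of $F$ would give $J\in\I'$ with $\spam J=F$, forcing $\varepsilon(F)=1$); hence $\spam I+F=\Rn$, so the projection of $X\cap F\setminus I$ onto $\Rn/\spam I$ spans the quotient, and lifting a basis produces $Z\subset X\cap F\setminus I$ with $I\cup Z\in\B(X)$ and $Z\subset F$. When $\varepsilon(F)=1$ and $\spam I\subset F$, extend $I$ within $X\cap F$ to a basis $I\cup Z_0$ of $F$ and append any $z_0\in X\setminus F$ (nonempty since $\rank X=n$) to form $Z=Z_0\cup\{z_0\}$ with $\#(Z\setminus F)=1$. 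When $\varepsilon(F)=1$ and $\spam I\not\subset F$, the same matroid-style construction as in the $\varepsilon(F)=0$ case delivers $Z\subset F$, already satisfying $\#(Z\setminus F)=0\le 1$. Combining the two directions gives Part~(2).
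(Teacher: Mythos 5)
Your overall strategy is the same as the paper's: the easy inclusion $\CI_\varepsilon(X,\I')\subset\CI_+(X,\I')$, then an induction on $n-\dim S$ driven by Proposition~\ref{pr:21} with the counting bound reducing to "at least one extension $T_i$ lies in $S(X,\I')$" (which you correctly derive from the extra hypothesis via $\CM(I)$ --- you actually spell out the step the paper dismisses as "easy to see"), and for Part~(2) the two inclusions via the polynomials $Q_{I'}$ and via annihilation by $\CI_\varepsilon(X,\I')$ using a completion $Z\in\BC_I(X)$ with $Z$ (almost) inside $F$.

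The one place where you diverge from the paper leaves a small but real gap. In the inductive step you handle the extensions $T_i\notin S(X,\I')$ by asserting that $\CI_+(X)\subset\CI_\varepsilon(X,\I')$ "yields" $\Pi^0_{m_X(T_i)+1}(T_i^\perp)\subset\CI_\varepsilon(X,\I')$. But $\CI_+(X)$ is generated only by the powers $p_{\eta_F}^{m(F)+1}$ attached to \emph{facet} hyperplanes, so for $\codim T_i\ge 2$ the containment $\Pi^0_{m_X(T_i)+1}(T_i^\perp)\subset\CI_+(X)$ is not a definition-chase; it is itself a statement requiring proof. The paper avoids this by running a simultaneous induction on the two statements (\ref{eq:pr2}) and (\ref{eq:pr1}), i.e., it proves $\Pi^0_{\#(X\setminus S)+1}(S^\perp)\subset\CI_\varepsilon(X,\I')$ for $S$ spanned by a member of $\I'$ alongside the statement you induct on. Your version can be repaired either by adding that second statement to your induction, or by deducing the missing containment from Theorem~\ref{th:explus}: for $q\in\Pi^0_{m_X(T)+1}(T^\perp)$ one has $q(D)p_Y=p_{Y\cap T}\,q(D)p_{Y\setminus T}=0$ for every $Y\subset X$ since $\#(Y\setminus T)\le m_X(T)<\deg q$, so $q$ annihilates $\CP_+(X)=\ker\CI_+(X)$ and hence lies in $\CI_+(X)$ by the (degree-by-degree) duality for zero-dimensional homogeneous ideals. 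With that step supplied, the rest of your argument, including Part~(2) and your clean treatment of the $\varepsilon(F)=1$, $\spam I\subset F$ case (which also absorbs the paper's special remark about $I=\emptyset$), is correct.
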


\begin{proof}
(1)\/ Every generator of $\CI_\varepsilon(X,\I')$ is in  $\CI_+(X,\I')$ directly
from the definition of these ideals, hence
$$
\CI_\varepsilon(X,\I')\subset \CI_+(X,\I').
$$
Therefore, we only need to prove that
$$
\Pi^0_{\#(X\setminus S)}(S\bot)\,\,\subset\,\, \CI_\varepsilon(X,\I'),
\,\,\, \hbox{\rm for all }  S\in S(X,\I') .
$$

\noindent
We run the proof by induction on $n-\dim S$. When $n-\dim S=1$,
i.e., $S\in \CF(X)$, we have
\begin{eqnarray}
 \Pi^0_{\#(X\setminus S)}(S\bot)
\,\,&\subset &\,\, \CI_{\varepsilon}(X,\I'), \quad S\in
S(X,\I'),\label{eq:pr2}\\
\Pi^0_{\#(X\setminus S)+1}(S\bot) \,\,&\subset &\,\,
\CI_{\varepsilon}(X,\I'), \quad S\in \{\spam(I):I\in \I'\}\label{eq:pr1}.
\end{eqnarray}
We will extend now (\ref{eq:pr2}) and (\ref{eq:pr1}) to sets $S$
of lower dimension. For the inductive step, we suppose that
(\ref{eq:pr2}) and (\ref{eq:pr1})
 hold when $n-\dim S=d>0$.
 We now consider the case where $S=\spam I$ for some independent
 $I\in \I(X)$, and $n-\dim S=d+1$.
Consider all possible linear spaces obtained as the span
$$\spam \{ S \cup x \}, \qquad x\in X \setminus S.$$ Assume that $j$ of them
are distinct and label them $S_1$ through $S_j$ (note that $j>1$ since
$d+1>1$).
By our induction hypothesis,
 $\Pi^0_{m_i} (S_i {\perp}) \subset \CI_{\varepsilon}(X,\I')$
for each $i=1, \ldots, j$, where
$m_i := \# (X\setminus S_i) +\varepsilon_i$ and
$\varepsilon_i := 1$ if $S_i \in \I'$ and
$\varepsilon_i := 0$ otherwise, i.e., if $S_i \notin \I'$.
By Proposition~\ref{pr:21}, we conclude that
$$
\Pi_{N}^0(S\bot)\, \subset  \, \CI_{\varepsilon}(X)
$$
whenever $(N+1)(j-1)$ is at least
$$ \sum_{i=1}^j \left(  \# (X\setminus S_i) +  \varepsilon_i \right)=
j \# (X\setminus S) - \# (X\setminus S) +\sum_{i=1}^j \varepsilon_i
\leq (j-1) \# (X\setminus S) + j.$$ If $S$ is spanned by a set that is
in $\I'$, then we can take $N$ to be $\#(X\setminus S)+1$ since
 $N$ so chosen satisfies
$$N \geq \# (X\setminus S) +{1\over j-1}.$$
If $S\in S(X,\I')$, then at least one of the extensions $S_i$ is not in
$\I'$ (otherwise, it will be easy to see that we violate the extra
condition that is now assumed on $\I'$),
and therefore $\sum_{i=1}^j \varepsilon_i \leq j-1$, hence the
value $N=\#(X\setminus S)$ is already large enough for our purposes.
This completes the  inductive step, hence completes the proof of this
part.

(2)\/ We first prove that
$$
\CP_+(X,\I')\subset \sum_{I\, \hbox{\scriptsize is minimal in } \I'}
\left(\bigcap_{Z\in \BC_I(X)}\CP(X\sqcup Z)\right).
$$
Pick $I'\in \I'$. There exists a minimal set $I$ from $\I'$ such that
$I\subset I'$. The definition of $p_{X(I')}$ shows that
$p_{X(I')}\in \CP(X\sqcup Z)$ for all $Z\in \BC_{I}(X)$. We conclude that
$$
p_{X(I')}\in \bigcap_{Z\in \BC_{I}(X)}\CP(X\sqcup Z),
$$
which implies that
$$
\CP_+(X,\I')\subset \sum_{I\, \hbox{\scriptsize is minimal in } \I'}
\left(\bigcap_{Z\in \BC_I(X)}\CP(X\sqcup Z)\right).
$$
We complete the proof by showing that every polynomial $p$ in $\bigcap_{Z\in
\BC_I(X)}\CP(X\sqcup Z)$ lies in $\ker \CI_{\varepsilon}(X,\I')$. Let $p$
be such a polynomial, and let $F\in S(X,\I')\cap \CF(X)$ (we need to
check only this case since the other case, $F \notin S(X,\I')$, is
straightforward).
We need to show that $D_{\eta_F}^{m(F)}p=0$,   where $\eta_F\bot F$ and
$m(F):=\#(X\setminus F)$. Incidentally, note that $I=\emptyset$
is possible only if $\I'=\I(X)$, i.e., when $\CP_+(X,\I')=\CP_+(X)$,
the case when all multiplicities are increased by $1$, which causes
no problem. Thus, $I$ contains at least one element whenever at least
one ``problematic'' multiplicity  exists. We now select
$Z\subset F\cap X$ so that $Z\in \BC_I(X)$ (such a $Z$ exists, since
$\spam I\not\subset F$ and $\codim F=1$). Then, $p\in \CP(X\sqcup
Z)$, hence $p$ is annihilated by $D_{\eta_F}^{\# (X\sqcup Z)\setminus F}$.
Since all vectors of $Z$ lie inside $F$, we conclude that $\# (X\sqcup
Z)\setminus F=m(F)$ and the result follows.  \end{proof}

%%%%%%%%%%%%%%%%%%%% SEMI-INTERNAL SPACES %%%%%%%%%%%%%%%%%%%%%%%%%%%%
\section{Semi-internal zonotopal spaces}
%%%%%%%%%%%%%%%%%%%%%% INTERNAL REVIEW %%%%%%%%%%%%%%%%%%%%%%%%%%%%%%%
\subsection{A review of internal zonotopal spaces}
%%%%%%%%%%%%%%%%%%%%%%%%%%%%%%%%%%%%%%%%%%%%%%%%%%%%%%%%%%%%%%%%%%%%%%

In this section we recall pertinent results from \cite{HR} concerning
the (full) internal zonotopal spaces and their associated ideals.
We impose an (arbitrary but fixed) ordering $\prec$ on $X$. Let
$B\in \B(X)$, and $b\in B$. We say that $b$ {\bf is internally active}
in $B$,  if
$$
b = \max\{X\setminus F\},\,\,\,\, F:=\spam\{B\setminus b\}\in
\CF(X).
$$
A basis $b$ that contains no internally active vectors is
called an {\bf internal basis}. We denote the set of all
internal bases by $\B_-(X)$. We now recall the
definition of the ideal $\CI_-(X)$ from the introduction, and add
the following definitions:
\begin{definition}
\begin{eqnarray*}
 \CJ_-(X)\,&:=&\,\Ideal\{p_Y:
Y\subset X,\ Y\cap B\not=\emptyset,\
\forall B\in \B_-(X)\},\\
\CP_-(X)\,&:=&\,\cap_{x\in X}\CP(X\setminus x),\\
\CD_-(X)\, &:=&\, \ker \CJ_-(X).
\end{eqnarray*}
\end{definition}
We denote
$$V_-:=\{\Cb(B): B\in \B_-(X)\}.$$
The following theorem is taken from \cite{HR}:
\begin{theorem}
\hfill
\begin{enumerate}
\item $\dim \CP_-(X)=\dim\CD_-(X)=\#\B_-(X)$.
\item The map $p\mapsto\left<p,\cdot\right>$ is an isomorphism  between
$\CP_-(X)$ and $\CD_-(X)^\ast$.
\item $\CD_-(X)=\Pi(V_-)$.
\item $\CP_-(X)=\ker\CI_-(X)$.
\item $\CP_-(X)\bigoplus\CJ_-(X)=\Pi$.
\end{enumerate}
\end{theorem}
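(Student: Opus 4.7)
The plan is to mirror the structure used in the proof of Theorem \ref{thm 2.6}, specializing to the most restricted set among the independent subsets, and to couple all five assertions through a single dimension count equal to $\#\B_-(X)$. First, I would establish the easy containment $\CP_-(X)\subset\CP(X)$ by observing that any short polynomial $p_Y$ of $\CP(X\setminus x)$ has $\rank(X\setminus Y)\ge\rank((X\setminus x)\setminus Y)=n$, hence is already short in $\CP(X)$. Then, for (1), I would produce a natural basis for $\CP_-(X)$ indexed by $\B_-(X)$: build, for each $B\in\B_-(X)$, a homogeneous polynomial of degree $\val(B)$ that lies in every $\CP(X\setminus x)$, and verify linear independence by a triangularity argument against the ambient basis $\{Q_B : B\in\B(X)\}$ of $\CP(X)$ supplied by Theorem \ref{th:basis}.

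For (5), I would first prove $\CP_-(X)+\CJ_-(X)=\Pi$ and then deduce directness from dimensions. Since $\CP(X)\oplus\CJ(X)=\Pi$ and $\CJ(X)\subset\CJ_-(X)$, it suffices to distribute the $Q_B$'s: those with $B\in\B_-(X)$ are representatives from $\CP_-(X)$, while each $Q_B$ with $B\notin\B_-(X)$ has an internally active element that should allow one to rewrite $Q_B$ modulo $\CJ_-(X)$ via an induction on the number of internally active elements, in the same spirit as the induction on $\#(X\setminus Y)$ in the proof of Theorem \ref{thm 2.6}(3). Having (1) and (5), the identification (3) follows by applying Theorem \ref{th:pi}(2) with $V'=V_-$ and $\B'=\B_-(X)$, which yields $\Pi(V_-)\subset\CD_-(X)$ and $\dim\CD_-(X)\ge\#\B_-(X)$; comparing with $\dim\CD_-(X)=\codim\CJ_-(X)\le\dim\CP_-(X)=\#\B_-(X)$ from (5) forces equality and $\CD_-(X)=\Pi(V_-)$.

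For (4), one direction is direct: if $p_Y\in\CP(X\setminus x)$ with $x\in X\setminus F$, then $\#(Y\setminus F)\le m(F)-2$ because both $x$ and at least one additional witness of $\rank((X\setminus x)\setminus Y)=n$ lie in $(X\setminus Y)\setminus F$, and hence $D_{\eta_F}^{m(F)-1}p_Y=p_{Y\cap F}\cdot D_{\eta_F}^{m(F)-1}p_{Y\setminus F}=0$, showing $\CP_-(X)\subset\ker\CI_-(X)$. For the reverse inclusion, chain $\ker\CI_-(X)\subset\ker\CI(X)=\CP(X)$ and exclude each $Q_B$ with $B\notin\B_-(X)$ by applying a suitable product of normal powers that detects an internally active element, following the pattern of the $q=\prod p_{\eta_i}^{m_i}$ construction in the proof of Theorem \ref{thm 2.6}(4). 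Finally, (2) is the standard nondegeneracy-of-pairing argument: for nonzero $q\in\CD_-(X)$, decompose via (5) as $q=f+p$ with $f\in\CJ_-(X)$, $p\in\CP_-(X)$; then $\inpro{q,f}=0$ gives $\inpro{q,p}=\inpro{q,q}\ne 0$, yielding injectivity, and the matching of dimensions upgrades injectivity to the asserted isomorphism.

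The main obstacle, in my view, is the combinatorial content of (1): explicitly pinning down a basis for $\CP_-(X)$ in terms of internal bases, and showing that the intersection $\cap_{x\in X}\CP(X\setminus x)$ has exactly the expected dimension $\#\B_-(X)$. Once the basis in (1) and the redistribution step in (5) are in place, the statements (3), (4), and (2) follow as dimension-and-duality consequences paralleling the semi-external case treated in Theorem \ref{thm 2.6}.
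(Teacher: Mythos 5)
Your overall architecture is sound and does parallel how the paper actually obtains this theorem (which it quotes from \cite{HR} but reproves in generalized form via Theorems \ref{th:minus1}--\ref{th:minus2} and Lemma \ref{lem:sum}, specialized to $I\in\B(X)$); parts (2), (3) and the inclusion $\CP_-(X)\subset\ker\CI_-(X)$ are fine as you describe them. However, the proposal breaks down at exactly the points where the internal case differs from the semi-external one. Your treatment of (1) and (5) rests on the claim that the $Q_B$ with $B\in\B_-(X)$ ``are representatives from $\CP_-(X)$.'' This is false: for $B\in\B_-(X)$ the polynomial $Q_B$ need not lie in $\CP_-(X)$ at all (the paper explicitly warns that $\{Q_B:B\in\B_-(X)\}$ is in general not a basis of $\CP_-(X)$). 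The entire difficulty of the theorem is concentrated in producing, for each $B\in\B_-(X)$, a corrected polynomial $\til{Q}_B\in\CP_-(X)$ with $Q_B-\til{Q}_B\in\CJ_-(X)$; this is Lemma \ref{lem:sum}, a long combinatorial construction in which the factors $p_w$ of $Q_B$ indexed by the maximal vectors $w=\max(X\bks F)$ of certain facets are replaced by $p_{w'}$ with $w'$ pushed into an intersection of facets. You flag this as ``the main obstacle'' but supply no construction, so neither the lower bound in (1) nor the equality in (5) is established. The upper bound $\dim\CP_-(X)\le\#\B_-(X)$ is a separate nontrivial step as well (the paper's Part V: $\CP_-(X)\cap\spam\{Q_B:B\notin\B_-(X)\}=0$, proved by a minimality argument on $\alp(B)=\#M(B)$); a ``triangularity argument against $\{Q_B\}$'' cannot settle it, precisely because elements of $\CP_-(X)$ are not triangularly supported on internal bases.

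Your route to the reverse inclusion in (4) also will not transfer. The operator $q=\prod_i p_{\eta_i}^{m_i}$ in the proof of Theorem \ref{thm 2.6}(4) is assembled from the extra generators $\Pi^0_{\#(X\bks S)}(S{\perp})$ of $\CI_+(X,\I')$ attached to subspaces of codimension at least $2$; the ideal $\CI_-(X)$ has only facet-normal powers, and products of those do not ``detect'' internal activity. The paper's actual argument for (4) is entirely different and much shorter: fix a basis $B$ (so $\CP_-(X)=\cap_{x\in B}\CP(X\bks x)$) and verify on generators that $\Ideal\{\cup_{x\in B}\CI(X\bks x)\}=\CI_-(X)$ --- one containment because $m_{X\bks x}(F)\ge m_X(F)-1$, the other because $p_{\eta_F}^{m(F)-1}\in\CI(X\bks x)$ for any $x\in B\bks F$. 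Then $\ker\CI_-(X)=\cap_{x\in B}\ker\CI(X\bks x)=\cap_{x\in B}\CP(X\bks x)=\CP_-(X)$ follows from the central case with no dimension count and no exclusion of external $Q_B$'s. I would adopt that argument for (4) and concentrate the remaining effort on constructing the $\til{Q}_B$ and on the Part~V--type independence statement; without those two ingredients the proof is incomplete.
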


In contrast with the external and semi-external cases, the
polynomials $\{Q_B:B\in \B_-(X)\}$ do not in general form a basis
for $\CP_-(X)$ \cite{HR}. However, the Hilbert series of
$\CP_-(X)$ can be still determined in the usual way using the
valuation function for internal bases, namely, via the sequence
(see \cite{HR})
$$
h_-(j):=h_{-,X}(j):=\dim(\CP_-(X)\cap \Pi_j^0)=\#\{B\in \B_-(X):
\val(B)=j\}.
$$

%%%%%%%%%%%%%%%%%%%% SEMI-INTERNAL ANALYSIS %%%%%%%%%%%%%%%%%%%%%%%%%%
\subsection{Introduction and analysis of semi-internal zonotopal spaces}
%%%%%%%%%%%%%%%%%%%%%%%%%%%%%%%%%%%%%%%%%%%%%%%%%%%%%%%%%%%%%%%%%%%%%%

Given the set $X$, the internal  space $\cpm{X}$ can be also defined as follows \cite{HR}:
$$\cpm{X}:=\cap_{b\in B}\cp{X\bks b},$$
where $B\in \B(X)$ is arbitrary. In particular, this implies that
the dimension of the above intersection is still expressed in
terms of the matroidal statistics of $X$, i.e., the number of
internally inactive bases of $X$. Suppose, instead, that we choose
$I\in \I(X)$ and define, similarly
$$\cpm{X,I}:=\cap_{b\in I}\cp{X\bks b}.$$
Then, a few questions arise naturally:
\begin{itemize}
\item Does the space $\cpm{X,I}$ depend only on $\spam I$ and not
on $I$ itself?

\item Is there a simple formula that expresses $\dim\cpm{X,I}$ in terms
of the cardinality of a suitable subset of $\B(X)$?

\item Is the ideal $\CI_-(X,I)$ of differential operators that annihilate
$\cpm{X,I}$ generated by powers of the normals to the facets of $X$?

\item Is there a dual construction (on the hyperplane arrangement) of
an ideal of the $\CJ$-class?
\end{itemize}
The answer to all the questions above turns out to be affirmative. In order
to present our results for the above setup, we select a full order
$\prec$ on $X$, and put the vectors in $I$ to be the last ones in this order,
i.e.,
$$x\prec y \quad \hbox{\rm for all } y\in I,\quad x\in X\bks I.$$
Further, we select the following subset of  {\bf $I$-facets:}
$$\CF(X,I):=\{F\in \CF(X): I\not\subset F\}.$$
We then single out the following
subset of $\B(X)$ of {\bf $I$-internal bases}:

\begin{definition}
Let $B\in \B(X)$. We say that $b\in B$ is $I$-internally active (in
$B$) if $b=\max \{X \bks \spam (B\bks b)\}$ and $b\in I$. We say
that $B$ is {\bf $I$-internal} if no vector $b$ in $B$ is
$I$-internally active in $B$.  We denote the set of all $I$-internal
bases by
$$\B_-(X,I).$$
\end{definition}

 Note that if $I\in \B(X)$ then $\cbm{X,I}=\cbm
{X}$, while if $I=\emptyset$ then $\cbm{X,I}=\B{(X)}$. Further,
note that, while $\cpm{X,I}$ does not depend on the order $\prec$,
the set of $I$-internal bases does depend on that order. Note also
that every internal basis is $I$-internal:
$$\B_-(X)\subset \B_-(X,I).$$
In addition to the $I$-internal bases, we need the following ideal:
$$\cim{X,I}:=\Ideal\{\CI(X)\cup\{\eta_F^{m(F)-1}: \
F\in \CF(X,I),\ 0\not=\eta_F\perp F\}\}.$$

\begin{theorem}\label{th:minus1}

\hfill
\begin{enumerate}
\item $\dim\cpm{X,I}=\#\B_-(X,I)$.
\item $\cpm{X,I}=\ker \cim{X,I}$.
\end{enumerate}
\end{theorem}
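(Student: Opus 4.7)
My plan is to obtain Part 2 from the identity
$$\cim{X,I}\,=\,\sum_{b\in I}\CI(X\setminus b),$$
and then to prove Part 1 by inducting on $\#I$. Granting the identity, Part~2 is immediate: $\ker\bigl(\sum_{b\in I}\CI(X\setminus b)\bigr)=\bigcap_{b\in I}\ker\CI(X\setminus b)=\bigcap_{b\in I}\cp{X\setminus b}=\cpm{X,I}$, using Theorem~\ref{th:exzono}. To prove the identity, I would note that every facet hyperplane $F$ of $X\setminus b$ (when $X\setminus b$ still has rank $n$) is already a facet hyperplane of $X$, with multiplicity preserved when $b\in F$ and reduced by one when $b\notin F$; hence $\CI(X\setminus b)$ is generated by $p_{\eta_F}^{m(F)}$ for $F\ni b$ together with $p_{\eta_F}^{m(F)-1}$ for $F\not\ni b$. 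Summing over $b\in I$ collects exactly the generators of $\cim{X,I}$: $p_{\eta_F}^{m(F)-1}$ for $F\in\CF(X,I)$ and $p_{\eta_F}^{m(F)}$ for $F$ containing $I$. The degenerate case that some $b\in I$ is a coloop requires a brief separate check: there $\cp{X\setminus b}=\{0\}$, while $F_0:=\spam(X\setminus b)\in\CF(X,I)$ has $m(F_0)=1$, so $\cim{X,I}$ contains $p_{\eta_{F_0}}^{0}=1$; both sides of Part~2 are then $\{0\}$, and $\B_-(X,I)$ is empty as well since $b$ lies in every basis of $X$ and is $I$-internally active there.

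For Part 1, I would induct on $\#I$. The base case $\#I=0$ reduces to $\cpm{X,\emptyset}=\cp{X}$ and $\B_-(X,\emptyset)=\B(X)$, with the dimension equality supplied by Theorem~\ref{th:exzono}. For the inductive step, put $b:=\max I$ (so $b=\max X$ by the chosen ordering) and $I':=I\setminus b$. Combinatorially, I claim $\B_-(X,I)=\{B\in\B_-(X,I'):b\notin B\}$: for any $b'\in B\cap I'$ the $I$- and $I'$-activity criteria coincide (both reduce to $b'=\max(X\setminus\spam(B\setminus b'))$), while $b\in B$ forces $I$-internal activity because $b=\max X$ is the maximum of $X\setminus\spam(B\setminus b)$. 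Algebraically, $\cpm{X,I}=\cpm{X,I'}\cap\cp{X\setminus b}$, and Part~2 applied at $I'$ shows that the new constraints carving $\cpm{X,I}$ out of $\cpm{X,I'}$ are precisely $p_{\eta_F}^{m(F)-1}(D)p=0$ for $F\in G:=\{F\in\CF(X):I'\subset F,\ b\notin F\}$. Assuming the inductive hypothesis $\dim\cpm{X,I'}=\#\B_-(X,I')$, Part~1 reduces to
$$\dim\bigl(\cpm{X,I'}/\cpm{X,I}\bigr)\,=\,\#\{B\in\B_-(X,I'):b\in B\}.$$

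The main obstacle is this last dimension-versus-count equality. My plan is a contraction argument: the facets in $G$ all contain $\spam I'$ and descend to facets of the contracted matroid $X/(X\cap\spam I')$ that avoid the image of $b$, so $\cpm{X,I'}/\cpm{X,I}$ should be identifiable with a suitable central-type space for this contraction. Bases $B\in\B_-(X,I')$ with $b\in B$ correspond, by extending a chosen basis of $\spam I'$ drawn from $X\cap\spam I'$, to bases of the contracted matroid through the image of $b$, and their count will then match the algebra-side dimension. The delicate point, on which I would spend the bulk of the effort, is verifying that the $I'$-internal activity condition on $X$ descends cleanly to an internal-activity condition on the contraction, so that the bijection between bases and the identification of the quotient space proceed without double-counting.
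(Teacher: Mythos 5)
Your Part 2 follows essentially the same route as the paper: the paper's Part I proves $\ker\cim{X,I}=\bigcap_{b\in I}\ker\CI(X\setminus b)$ by exactly the two ideal inclusions you describe (your explicit treatment of the coloop case is a reasonable addition, though you should be careful that $\CF(X\setminus b)$ need not equal $\CF(X)$ when removing $b$ destroys a facet, so the generator-by-generator matching needs the same care the paper takes). The genuine gap is in Part 1. Your reduction to the identity $\dim\bigl(\cpm{X,I'}/\cpm{X,I}\bigr)=\#\{B\in\B_-(X,I'):b\in B\}$ is plausible, but this identity is where essentially the entire difficulty of the theorem lives, and you offer only a one-sentence plan (``a contraction argument \dots should be identifiable with a suitable central-type space'') while explicitly conceding that the key verification is not done. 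As it stands the quotient $\cpm{X,I'}/\cpm{X,I}$ is cut out by the simultaneous conditions $p_{\eta_F}^{m(F)-1}(D)p=0$ over all $F\supset\spam I'$ with $b\notin F$, and identifying that quotient with a central space of a contraction, with the correct dimension, is not routine. For comparison, the paper obtains the dimension count by a completely different mechanism: it introduces the dual ideal $\cjm{X,I}$ generated by $I$-long subsets, shows that $\CP_{ex}:=\spam\{Q_B:B\in\B(X)\bks\B_-(X,I)\}$ lies in $\cjm{X,I}$ (upper bound on $\codim\cjm{X,I}$), shows $\cpm{X,I}\cap\CP_{ex}=\{0\}$ (upper bound on $\dim\cpm{X,I}$), and then proves $\cjm{X,I}+\cpm{X,I}=\Pi$ via the long Lemma~\ref{lem:sum}, in which each $Q_B$, $B\in\B_-(X,I)$, is corrected by perturbing some of its linear factors so as to land in $\cpm{X,I}$ modulo $\cjm{X,I}$. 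Your proposal supplies no substitute for any of this.

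There is also a secondary flaw in the induction itself. The set $\B_-(X,I')$ is defined relative to an order in which the elements of $I'$ are the \emph{last} elements of $X$, whereas in your inductive step you retain the order in which $I$ is last, so that $b\succ y$ for every $y\in I'$. Under that inherited order, an element $b'\in B\cap I'$ can fail to be $\max\{X\bks\spam(B\bks b')\}$ precisely because $b$ beats it, so the count $\#\B_-(X,I')$ you use need not be the one covered by the inductive hypothesis. You would need to prove order-independence of the count over all (not just admissible) orders, or restructure the induction; neither is addressed.
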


\noindent The result $\cpm{X,I}=\ker \cim{X,I}$ implies  that
$\cpm{X,I}$ depends only on $\spam I$ and not on $I$ itself. We
prove this theorem in the sequel. Let us next introduce the dual
setup, that goes as follows: first, we say that $Y\subset X$ is
{\bf $I$-long} if $Y\cap B\not=\emptyset$ for each $B\in
B_-(X,I)$. Set
$$\cjm{X,I}:=\Ideal\{p_Y:\ Y\subset X \hbox{ is $I$-long}\}.$$
Note that the ideal $\cjm{X,I}$ depends on the order we choose,
since the set $B_-(X,I)$ depends on that order.

Let now $\CH(X,\lam)$ be a simple hyperplane arrangement as in
Section~\ref{sec:motiv}. Then there is a natural bijection $B\mapsto v_B$ from $\B(X)$ onto
the vertex set $V(X,\lam)$ of the hyperplane arrangement. Denote
$$V_-(X,\lam,I):=\{v_B: \ B\in\B_-(X,I)\},\quad \cdm{X,I}:=\ker\cjm{X,I}.$$

\begin{theorem}\label{th:minus2}

\hfill
\begin{enumerate}
\item $\cdm{X,I}=\Pi(V_-(X,\lam,I))$, in particular
$$\dim\cdm{X,I}=\#\B_-(X,I).$$
\item $\cjm{X,I}\oplus \cpm{X,I}=\Pi$.
\item The map $p\mapsto \inpro{p,\cdot}$ is an isomorphism from $\cpm{X,I}$
onto $\cdm{X,I}^\ast$.
\end{enumerate}
\end{theorem}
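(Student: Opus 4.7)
The strategy parallels the proof of Theorem~\ref{thm 2.6}: parts~(1) and~(2) will fall out of a lower bound on $\dim\cdm{X,I}$ supplied by the least map combined with the identity $\cpm{X,I}+\cjm{X,I}=\Pi$, and (3) will follow from the non-degeneracy of the bilinear pairing on graded pieces.

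First, I would specialize Theorem~\ref{th:pi} to $\B':=\B_-(X,I)$; the associated ideal $\CJ_{\B'}(X)$ coincides with $\cjm{X,I}$ by the very definition of $I$-long, and the corresponding vertex set is $V_-(X,\lam,I)$. The theorem yields
$$
\Pi(V_-(X,\lam,I))\,\subset\, \cdm{X,I}, \qquad \dim\cdm{X,I}\,\ge\, \#\B_-(X,I).
$$
Granted the identity $\cpm{X,I}+\cjm{X,I}=\Pi$, the natural surjection $\cpm{X,I}\twoheadrightarrow\Pi/\cjm{X,I}$ delivers
$$
\dim\cdm{X,I}\,=\,\codim\cjm{X,I}\,\le\,\dim\cpm{X,I}\,=\,\#\B_-(X,I),
$$
the last equality being Theorem~\ref{th:minus1}. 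Comparison with the previous display forces all three quantities to equal $\#\B_-(X,I)$, whence $\cdm{X,I}=\Pi(V_-(X,\lam,I))$ (part~(1)); the matching of $\codim\cjm{X,I}$ with $\dim\cpm{X,I}$ upgrades the sum to a direct sum (part~(2)).

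The substantive work is therefore the identity $\cpm{X,I}+\cjm{X,I}=\Pi$, and this is where I expect the main obstacle. I plan to attack it by induction on $\#I$. The base $I=\emptyset$ collapses to $\CP(X)\oplus\CJ(X)=\Pi$ (Theorem~\ref{th:exzono}). For the inductive step, set $b:=\max I$ and $I':=I\setminus\{b\}$; since $\cjm{X,I'}\subset\cjm{X,I}$, it suffices to show $\cpm{X,I'}\subset \cpm{X,I}+\cjm{X,I}$. Given $p\in\cpm{X,I'}$, the natural move is to invoke the central splitting for $X\setminus b$, writing $p=p_0+g$ with $p_0\in\CP(X\setminus b)$ and $g\in\CJ(X\setminus b)$, and then to check that $p_0\in\cpm{X,I'}\cap\CP(X\setminus b)=\cpm{X,I}$ while $g\in\cjm{X,I}$. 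The delicate point is the former, since $p_0=p-g$ and $g$ is only known to sit inside $\CJ(X\setminus b)$; carrying this through will require a structural lemma exploiting that $b$ is the maximum of $I$, so that the $I$-internal bases of $X$ differ from the $I'$-internal ones precisely by the activity condition concerning $b$ and the facet $\spam(B\setminus b)$.

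Part~(3) then follows formally. Both $\cpm{X,I}$ and $\cdm{X,I}$ are graded subspaces of $\Pi$ (the former spanned by the homogeneous products $p_Y$, the latter being the kernel of the homogeneous ideal $\cjm{X,I}$), and the pairing $\inpro{p,q}=(p(D)q)(0)$ is non-degenerate on each $\Pi_d^0$. Given any nonzero homogeneous $q\in\cdm{X,I}$ of degree $d$, non-degeneracy supplies a $p\in\Pi_d^0$ with $\inpro{q,p}\ne 0$; using part~(2) and gradedness, decompose $p=f+p'$ with $f\in\cjm{X,I}\cap\Pi_d^0$ and $p'\in\cpm{X,I}\cap\Pi_d^0$. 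Since $q\in\ker\cjm{X,I}$ annihilates $f$, we get $\inpro{q,p'}=\inpro{q,p}\ne 0$. Hence $q\mapsto\inpro{q,\cdot}|_{\cpm{X,I}}$ from $\cdm{X,I}$ to $\cpm{X,I}^\ast$ has trivial kernel, and by equality of dimensions it is an isomorphism; its transpose is the asserted isomorphism of part~(3).
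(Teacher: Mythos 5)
Your formal skeleton is sound and matches the paper's: Theorem~\ref{th:pi} gives $\Pi(V_-(X,\lam,I))\subset\cdm{X,I}$ and $\dim\cdm{X,I}\ge\#\B_-(X,I)$, the identity $\cpm{X,I}+\cjm{X,I}=\Pi$ gives the reverse inequality and forces both (1) and the directness in (2), and your part (3) is a correct duality argument (the paper's version decomposes $q$ itself as $f+p$ and notes $\inpro{q,p}=\inpro{q,q}\ne 0$, which is essentially what you do degree by degree). However, the entire substance of the theorem lies in the identity $\cpm{X,I}+\cjm{X,I}=\Pi$, and there you have a genuine gap: you explicitly defer the crucial step to ``a structural lemma'' that you do not supply. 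In the paper this is Lemma~\ref{lem:sum}, whose proof is a long explicit construction: one first shows (Part III of the paper's proof) that $Q_B\in\cjm{X,I}$ for every $B\in\B(X)\setminus\B_-(X,I)$, reducing the problem to the polynomials $Q_B$ with $B\in\B_-(X,I)$, and then for each such $B$ one factors $Q_B=p_Zp_W$, replaces each $w_i\in W$ by a corrected vector $w_i'$ determined by the flag $S_0\supset S_1\supset\cdots\supset S_k$ of intersections of the ``problematic'' facets, and verifies by a three-case analysis that $p_Zp_{W'}\in\ker\cim{X,I}$ while $Q_B-p_Zp_{W'}\in\cjm{X,I}$. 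None of this is routine, and no short inductive shortcut is known (the authors note that even the special case $I\in\B(X)$ from \cite{HR} requires a separate delicate argument).

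Moreover, the specific induction you sketch is unlikely to close. Writing $p=p_0+g$ with $p_0\in\CP(X\setminus b)$ and $g\in\CJ(X\setminus b)$, the inclusion $g\in\cjm{X,I}$ does go through (because $b=\max I$ is the last element of $X$ in the chosen order, so $b$ is $I$-internally active in any basis avoiding a generator of $\CJ(X\setminus b)$), but the claim $p_0\in\cpm{X,I'}$ is exactly as hard as the original problem: $p_0$ is the projection of $p$ onto $\CP(X\setminus b)$ along $\CJ(X\setminus b)$, and there is no reason this projection preserves membership in $\CP(X\setminus c)$ for the other $c\in I'$, since $g$ need not be annihilated by the operators $D_{\eta_F}^{m_{X\setminus c}(F)}$. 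There is also an ordering inconsistency in the induction: $\cjm{X,I'}$ and $\B_-(X,I')$ are defined relative to an order placing $I'$ last, whereas your order places $b\notin I'$ last, so the inclusion $\cjm{X,I'}\subset\cjm{X,I}$ compares ideals built from different orders and needs justification. Finally, note that you invoke $\dim\cpm{X,I}=\#\B_-(X,I)$ from Theorem~\ref{th:minus1}; in the paper that count is itself \emph{derived from} Lemma~\ref{lem:sum} together with the separate estimate $\cpm{X,I}\cap\spam\{Q_B:B\in\B(X)\setminus\B_-(X,I)\}=\{0\}$, so if the two theorems are to be proved as a package you cannot take that dimension formula for granted either.
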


\begin{proof} (Theorems \ref{th:minus1} and \ref{th:minus2}). \/ The last
assertion in Theorem \ref{th:minus2} is a direct consequence of the second
assertion in that theorem and the fact that $\cdm{X,I}=\ker \cjm{X,I}$:
the argument is identical to the one used to prove {\bf 5.} of Theorem
\ref{thm 2.6}. We
divide the rest of the proof into six parts as follows.

{\bf Part I: $\cpm{X,I}=\ker\cim{X,I}$.}\hfill\break
Since, for every $x\in X$,
$\CP(X\setminus x)=\ker \CI(X\setminus x)$, we may prove the
stated result by showing that (i)
$\CI(X\setminus x)\subset \cim{X,I}$ for every $x\in I$, and (ii)
$\cim{X,I}\subset \Ideal\{\cup_{x\in I} \CI(X\setminus x) \}$.

For the proof of (i), fix $x\in I$, and denote $X':=X\setminus x$. A
generator $Q$ in the ideal $\CI(X')$ is of the form
$Q:=p_{\eta_F}^{m_{X'}(F)}$, with $F\in \CF(X')$. Then $F$
is also a facet hyperplane of $X$. Now, if $I\subset F$, then
$x\in F$. Therefore
$m_{X'}(F)=m_X(F)$ and $Q$ above lies in $\CI(X)$ hence also in $\cim{X,I}$.
If, on the other hand, $I\not\subset F$, then the polynomial
$p_{\eta_F}^{m_X(F)-1}$ lies in $\cim{X,I}$. This implies that
$Q$ lies in that ideal, too, since $m_{X'}(F))\ge m_X(F)-1$.

For (ii), we first note that $\CI(X)$ lies in each ideal of the form
$\CI(X\bks x)$. Thus, we  may simply show that every generator of
$\cim{X,I}$ of the form $Q=p_{\eta_F}^{m(F)-1}$ lies in one of the ideals
$\CI(X\bks x)$, $x\in I$. Here, $F$ is a facet hyperplane of $X$, and
$I\not\subset F$. Let $x\in I\bks F$.
Denote $X':=X\setminus x$.
Since $x\notin F$, it is clear that $F\in \CF(X')$, and then
$m_{X'}(F)=m_X(F)-1$. Thus the polynomial $Q$ lies
in $\CI(X')$, and (ii) follows.

\smallskip
{\bf Part II: $\Pi(V_-(X,\lam,I))\subset \cdm{X,I}$,
$\dim\cdm{X,I}\ge \#\B_-(X,I)$.} Both claims are obtained by a standard
argument; see Theorem~\ref{th:pi}.

\smallskip
{\bf Part III: $\dim\cdm{X,I}=\#\B_-(X,I)$.} In view of Part II, we only
need to prove the $\le $ inequality.

To this end, we note that $\CJ(X)+\CP(X)=\Pi$ by Theorem~\ref{th:exzono}
and since
$\CJ(X)\subset\cjm{X,I}$, we have that $\cjm{X,I}+\CP(X)=\Pi$. Now,
let $(Q_B)_{B\in\B(X)}$ be the homogeneous basis for $\CP(X)$ (per
our chosen order for $X$; see Theorem~\ref{th:basis}). Set
$$\CP_{ex}:=\spam\{Q_B: \ B\in \B(X)\bks \B_-(X,I)\}.$$
We show that $\CP_{ex}\subset \cjm{X,I}$. This will imply that
$$\Pi=\cjm{X,I}+\CP_{in},\quad \CP_{in}:=\spam\{Q_B:\ B\in \B_-(X,I)\},$$
hence that
$$\dim\cdm{X,I}=\dim\Pi/\cjm{X,I}\le \dim\CP_{in}=\#\B_-(X,I),$$
which is the desired result.

So, we need to show that each $Q_B$, $B\in\B(X)\bks\cbm{X,I}$ lies
in $\cjm{X,I}$. Now, $Q_B=p_{Y}$, with
$$Y:=\{x\in X\bks B:\ x\not\in\spam\{b\in B: b\prec x\}\}.$$
Since $B\not\in \cbm{X,I}$, there exists $b\in B\cap I$ such that, with
$F:=\spam(B\bks b)$, $b=\max\{X\bks F\}$.  This shows that
$$X\bks Y\subset F\sqcup\{b\}.$$
However, every basis $B\subset F\sqcup\{b\}$ is a basis for $F$ augmented
by $b$, hence is not $I$-internal. Consequently, $Y$ is $I$-long
hence lies in $\cjm{X,I}$.

\smallskip
{\bf Part  IV: $\cdm{X,I}=\Pi(V_-(X,\lam,I))$.} This follows directly from
Parts II and III, since \break $\dim\Pi(V_-(X,\lam,I))=\#\cbm{X,I}$.

\smallskip
{\bf Part  V: $\dim\cpm{X,I}\le\#\cbm{X,I}$.}
The proof of this assertion follows  from the fact that
\begin{equation}\label{int:pmex}
\cpm{X,I}\cap\CP_{ex}=\{0\}.
\end{equation}
Indeed, once (\ref{int:pmex}) is proved, we conclude that,
since $\cpm{X,I},\CP_{ex}\subset \CP(X)$,
$$\dim\cpm{X,I}\le \dim\CP(X)-\dim\CP_{ex}=\dim\CP_{in}=\#\cbm{X,I}.$$

The actual proof of (\ref{int:pmex}) follows almost verbatim the
proof of the special case $I\in \B(X)$ from \cite[Theorem
5.8]{HR}. We briefly outline the proof there, and add an
additional argument that is required in our more general setup.

We start by writing down an arbitrary element in $\CP_{ex}\bks 0$:
$$\sum_{B\in\B(X)\bks \cbm{X,I}}a(B)Q_B.$$
We then select a summand $a(B')Q_{B'}$ in the above sum such that
$a(B')\not=0$, and such that $B'$ is minimal (among all summands with non-zero
coefficients) with respect to the valuation
$$\alp(B):=\#M(B), \quad M(B):=
\{b\in B\cap I: \ b=\max(X\bks \spam(B\bks b))\}.$$
By the definition of $\CP_{ex}$, $\alp(B')>1$. Let $b'\in M(B')$, and
set $F':=\spam(B'\bks b')$. The argument in \cite[Theorem 5.8]{HR} then
reduces the proof of the fact that $a(B')=0$ to
showing that, if $B\in \B(X)\bks \cbm{X,I}$, if $B\cap F'= B'\cap F'$, and
if $B\not=B'$, then $\alp(B)<\alp(B')$. So, we pick now such $B$, and
prove that $M(B)\subset M(B')$. This proves that $\alp(B)<\alp(B')$, since
$b'\in M(B')\bks M(B)$ (if $b'\in M(B)$  it follows that $B=B'$).

Thus, we pick $x\in M(B)$ and prove that it lies in $M(B')$, too.
To this end, we denote $A:=B'\bks b'$. Then $A$ is a basis for $F'$,
and $B=A\sqcup b$, for some $b\in X$. Necessarily, $x\in A$. Set
$S:=A\bks x$.
Note that $\rank S=n-2$. Assume that $x\not\in M(B')$. Since
$x\in I\cap B'$,  we conclude that there
exists $y \succ x$ such that $y\not\in
\spam\{S\cup b'\}$. Assume $y$ to be maximal element outside $\spam\{S\cup
b'\}$. We get the contradiction to the existence of such $y$ by
showing that it is impossible to have $y \succ b'$, and it is also
impossible to have $y \prec b'$. Note that $y\in I$, since $x\in I$.

If $y\succ b'$, then, since $b'$ is maximal outside $\spam\{B\bks b'\}
=\spam A=\spam\{S\cup x\}$, we have that $y\in\spam\{S\cup x\}$. Also,
since $y\succ x$, and $x$ is maximal outside $\spam \{ B\bks x\}=
\spam\{S\cup b\}$, we have $y\in\spam\{S\cup b\}$. But $S\cup b\cup x=B$,
and $B$ is independent, hence $y\in \spam S$, which is impossible since
we assume $y$ to be outside $\spam\{S\cup b'\}$.

Otherwise, $y\prec b'$. The maximality of $y$ then implies
that $x\prec y \prec b'$. The maximality of $x$ outside $\spam\{S\cup b\}$
implies that $b'\in\spam\{S\cup b\}$. Since $b'\not\in S$,
we obtain that $\spam\{S\cup b\}=\spam\{S\cup b'\}$, which is impossible
since $y$ lies in exactly one of these two spaces.

\smallskip
{\bf Part  VI: $\dim\cpm{X,I}=\#\cbm{X,I}$, and $\cjm{X,I}\oplus
\cpm{X,I}=\Pi$.} We prove in Lemma \ref{lem:sum}  below that
$$\cjm{X,I}+\cpm{X,I}=\Pi.$$
This implies that
$$\dim\cpm{X,I}\ge \dim\ker\cjm{X,I}=\#\cbm{X,I},$$
with the equality by Part III. This, together with Part V shows that
$\dim\cpm{X,I}=\#\cbm{X,I}.$ Thus, $\dim\cpm{X,I}=\dim\Pi/\cjm{X,I}$,
hence the sum $\cpm{X,I}+\cjm{X,I}=\Pi$ must be direct.
\end{proof}

\begin{lemma}  \label{lem:sum}
$$\cjm{X,I}+\cpm{X,I}=\Pi.$$
\end{lemma}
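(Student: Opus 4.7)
My plan is to reduce the lemma to showing $Q_B \in \cpm{X,I}$ for every $I$-internal basis $B$, by exploiting the homogeneous basis $\{Q_B\}_{B \in \B(X)}$ of $\CP(X)$ from Theorem~\ref{th:basis}. Since $\CP(X) + \CJ(X) = \Pi$ by Theorem~\ref{th:exzono}(5), and since $\CJ(X) \subseteq \cjm{X,I}$ (every long subset of $X$ meets every basis of $X$, in particular every $I$-internal one), we have $\CP(X) + \cjm{X,I} = \Pi$. Splitting $\CP(X) = \CP_{in} + \CP_{ex}$ via $\CP_{in} := \spam\{Q_B : B \in \cbm{X,I}\}$ and $\CP_{ex} := \spam\{Q_B : B \in \B(X) \setminus \cbm{X,I}\}$, and invoking Part III of the present proof (which has already established $\CP_{ex} \subseteq \cjm{X,I}$), the claim reduces to $\CP_{in} \subseteq \cpm{X,I}$.

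To verify $Q_B \in \cpm{X,I}$ for $B \in \cbm{X,I}$, use the identification $\cpm{X,I} = \ker \cim{X,I}$ from Part I. The generators of $\cim{X,I}$ coming from $\CI(X)$ are automatically annihilated by $Q_B$ (since $Q_B \in \CP(X)$); what remains is $p_{\eta_F}^{m(F)-1}(D)\, Q_B = 0$ for each $F \in \CF(X,I)$. Via the elementary identity $p_\eta^k(D)\, p_Y = 0 \iff \#(Y \setminus \eta^\perp) < k$, this rewrites as the strict inequality $\#(X(B) \setminus F) \leq m(F) - 2$. The weaker bound $\#(X(B) \setminus F) \leq m(F) - 1$ is immediate from $Q_B \in \CP(X)$; so one must rule out equality in this weak bound.

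The chief obstacle is excluding this extremal configuration. Equality $\#(X(B) \setminus F) = m(F) - 1$ forces $B \setminus F = \{b^*\}$ to be a single vector and $(X \setminus B) \setminus X(B) \subseteq F$; since $F \in \CF(X,I)$ supplies some $x \in I \setminus F$, a brief case analysis places $b^* \in B \cap I$. The $I$-internality of $B$ then yields a witness $y^* \in X \setminus F$ with $y^* \succ b^*$ and $y^* \notin B$. The delicate step is verifying that the $B$-expansion of $y^*$ uses only predecessors, putting $y^*$ into $(X \setminus B) \setminus X(B)$ outside $F$ and contradicting the extremal configuration. This verification exploits the convention that vectors of $I$ come last in the order, together with the maximality of $y^*$ in $X \setminus F$, and may require iterating the $I$-internal hypothesis at other elements of $B \cap I \cap F$ positioned later than $y^*$.
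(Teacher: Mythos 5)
Your reduction of the lemma to the inclusion $\CP_{in}\subset\cpm{X,I}$, i.e., to the claim that $Q_B\in\cpm{X,I}$ for every $B\in\cbm{X,I}$, is a reduction to a false statement, and this is the fatal gap. The paper is explicit on this point: the polynomials $Q_B$, $B\in\cbm{X,I}$, do \emph{not} in general lie in $\cpm{X,I}$ (see the remark following the lemma, which states that they do not in general form a basis for $\cpm{X,I}$; since they are linearly independent and, by Part V, $\dim\cpm{X,I}\le\#\cbm{X,I}$, they would necessarily form such a basis if they all belonged to that space). Concretely, the ``delicate step'' you flag cannot be carried out: for $B\in\cbm{X,I}$ with $B\bks F=\{b^*\}$ and $b^*\in I$, $I$-internality only supplies some $y^*\succ b^*$ with $y^*\in X\bks F$, and nothing prevents $y^*$ from lying in $X(B)$ itself (namely when $y^*\notin\spam\{b\in B:\ b\prec y^*\}$), in which case $y^*$ is already counted in $X(B)\bks F$ and no contradiction with $\#(X(B)\bks F)=m(F)-1$ arises. (Your placement of $b^*$ in $B\cap I$ is also not automatic: the witness $x\in I\bks F$ need not belong to $B$, and may sit in $X(B)\bks F$.)

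What the correct argument must establish is only the weaker statement $Q_B\in\cjm{X,I}+\cpm{X,I}$, and this is exactly where the paper's proof does its real work. It collects the facets $F$ for which $D_{\eta_F}^{m(F)-1}Q_B\ne0$ and $\max(X\bks F)\in I$, extracts the corresponding maximal vectors $W=\{w_1,\dots,w_k\}\subset X(B)$, writes $Q_B=p_Zp_W$, and replaces each $w_i$ by a projected vector $w_i'$ lying in a suitable intersection $S_i$ of those facets; it then proves separately that $\tilQ_B:=p_Zp_{W'}\in\ker\cim{X,I}=\cpm{X,I}$ and that $Q_B-\tilQ_B\in\cjm{X,I}$ (each cross term in the expansion of $p_{W'}-p_W$ is shown to be $I$-long). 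Your preliminary reductions --- $\CP(X)+\cjm{X,I}=\Pi$, the split $\CP(X)=\CP_{in}+\CP_{ex}$, and $\CP_{ex}\subset\cjm{X,I}$ from Part III --- agree with the paper's, but without this replacement construction the proof does not go through.
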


\begin{proof}
The special case of this result for the choice $I\in \B(X)$ was proved
in \cite[Theorem 5.7]{HR}. While most of the proof here
parallels the one in \cite{HR}, there is a significant difference in one
of the details which requires us to provide here a complete self-contained
proof.

The proof of the previous theorem reduces the proof here to showing that,
for each $Q_B$, $B\in\cbm{X,I}$, $Q_B\in \cjm{X,I}+\cpm{X,I}$.
Fixing $B\in \cbm{X,I}$, we know that
$Q_B=p_{X(B)}$, for  suitable $X(B)\subset X$. We decompose
$X(B)$ in a certain way $X(B)=Z\sqcup W$. Thus
$$Q_B=p_Zp_W.$$
We then replace each $w\in W$ by a vector $w'$ (not necessarily
from $X$), to obtain a new polynomial
$$\til{Q_B}\eqbd p_{Z}p_{{W'}},$$
and prove that (i) $\til{Q_B}\in \cpm{X,I}$, and (ii)
$Q_B-\til{Q_B}\in \cjm{X,I}$.

So, let $Q_B=p_{X(B)}$ be given. If $Q_B\in\ker \cim{X,I}=\cpm{X,I}$,
there is nothing to prove. Otherwise, let ${\bf F}\subset\CF(X)$ be
the collection of {\it all\/} facet hyperplanes $F$ for which
$D_{\eta_F}^{m(F)-1}Q_B\not=0$, and
$\max(X\bks F)\in I$. The set ${\bf F}$ is not empty, since otherwise
$Q_B\in \ker \cim{X,I}$. Given $F\in {\bf F}$, we conclude
that $\#(X(B)\bks F)\ge m(F)-1$, hence that, with $Y\eqbd X\bks X(B)$,
$\#(Y\bks F)\le 1$. Since $B\subset Y$, the set
$Y\bks F$ must be a singleton $x_F\in B$. We denote
$$X_\bfF\eqbd \{x_F:\ F\in \bfF\}.$$

Define
$$W\eqbd \{\max \{ X\bks F \} :\ F\in\bfF\}.$$
Then $W\subset I$, by the definition of $\bfF$.
We index the vectors in $W$ according to their order in $X$:
$W=\{w_1\prec w_2\prec\ldots\prec w_k\}.$
For each $1\le i\le k$, we define
$$X_i\eqbd \{x_F:\ F\in\bfF,\ \max\{X\bks F\}= w_i\},\quad \bfF_i\eqbd \{F\in
\bfF:\ x_F\in X_i\}.$$
Thus, $X_\bfF=\bigcup_{i=1}^kX_i$.

Setting all these notations, we first observe that $W\cap X_\bfF=\emptyset$,
i.e., $w_i$ does not lie in $X_i$. Indeed, the set $X_\bfF$ is a subset
of every $B'\in \B(Y)$, with $\spam(B'\bks x_F)=F$ for each $x_F\in X_\bfF$.
If some $x_F$ is $\max\{ X\bks F\}$, it will be $I$-internally active in every
$B'\in\B(Y)$, which would imply that $\B(Y)$ does not contain $I$-internal
bases, which is impossible since $B\in \B(Y)$. Thus, $W\subset X(B)$, and we
define $Z\eqbd X(B)\bks W$, to obtain
$$Q_B=p_Zp_W.$$

Define further:
$$S_i\eqbd \cap\{F: F\in\cup_{j=1}^i \bfF_j\},\quad S_0\eqbd \Rn.$$
Then, for $i=1,\ldots,k$,
$S_{i-1}=S_{i}\oplus \spam\, X_{i}$,
and $w_{i}\in S_{i-1}\bks S_{i}$.
Thus, for $i=1,\ldots k$, the vector $w_i$ admits a unique representation of the
form
\begin{equation}\label{defxijprime}
w_i=w_i'+\sum_{x\in X_{i}}a_xx,\quad w_i'\in S_{i},\  a_x\in \R\bks
\{0\}.
\end{equation}
Define
$$W'=\{w_1',\ldots,w_k'\},\ \hbox{and }
\tilQ_B\eqbd p_{Z}p_{W'}.$$
We prove first that
$$\tilQ_B-Q_B=p_{Z}(p_{W'}-p_W)$$
lies in $\cjm{X,I}$.
To this end, we multiply out the product
\begin{equation}
p_{W'}=\prod_{i=1}^k p_{w_i'}=\prod_{i=1}^k(p_{w_i}-\sum_{x\in X_i
}a_x p_{x}).\end{equation}
Every summand in the above expansion is of the form $p_\Xi$, with
$\Xi$ a suitable mix of $W$-vectors and $X_\bfF$-vectors.
The summand $p_W$ in the above expansion in canceled when we subtract
$Q_B$. Any other $\Xi$ is obtained from $W$ by replacing at least once
a  $w_i$ vector by some vector  in $X_i$, which we denote by $x_i$. Let
$w_{i_1}\prec w_{i_2}\prec\ldots\prec w_{i_j}$ be all the $w$-vectors
in $W\bks \Xi$, and let $F_1$ be the facet hyperplane that
corresponds to $x_{i_1}$ ($F_1\eqbd \spam(B\bks x_{i_1})$.)
Then, we have that $w_{i_1}\in X\bks(Z\sqcup \Xi)=:Y'$, and we claim that
$Y'\bks w_{i_1}\subset F_1$. To this end, we write
$Y'\bks F_1=((Y'\cap Y)\bks F_1)\sqcup (Y'\bks Y)\bks F_1$.
Now, $Y\bks F_1=x_{i_1}$, and since $x_{i_1}\not\in Y'$ (as it was replaced
by $w_{i_1}$), the term $(Y'\cap Y)\bks F_1$ is empty. The second
term consists of $(w_{i_m})_{m=1}^j\bks F_1$. However,
$w_{i_m}\in S_{i_m-1}\subset S_{i_1}\subset F_1$, for every $m\ge 2$.
Thus, $w_{i_1}$ is the only vector in $Y'\bks F_1$. Being also the
last vector in $X\bks F_1$, we conclude that $w_{i_1}$ is
$I$-internally active in every $B\in \B(Y')$, hence
that
$p_{Z\sqcup \Xi}\in \cjm{X,I}$. This being true for every summand
in $\tilQ_B-Q_B$, we conclude that this latter polynomial lies in
$\cjm{X,I}$.

We  now prove that $\tilQ_B=p_{Z\sqcup W'}\in \ker\cim{X,I}$. To this end,
we need to show that, for every $F\in\CF(X)$,
$\#((Z\sqcup W')\bks F)<m(F)-\eps(F)$, with
$\eps(F)=1$ if $I\not\subset F$, and $\eps(F)=0$ otherwise.
We divide the discussion here to three cases.
As before, $Y\eqbd X\bks X(B)$.

Assume first that $F\in \bfF_i$ for some
$1\le i\le k$. In this case, $\eps(F)=1$.
Now, for $X(B)=Z\sqcup  W$ we had that
$\#((Z\sqcup W)\bks F)=m(F)-1$. Also, $x_F$ is the only
vector in $Y\bks F$, and $x_F\in X_i$.  Thus,
the subset $X_j\subset Y$, must lie in $F$ for every $j\not= i$,
which means that we conclude that,
$w_j\in F$ iff $w'_j\in F$ (since
$w_j-w_j'\in \spam X_j\subset F$).
Finally, while $w_i\not\in F$, $w'_i\in S_i\subset F$,
hence, altogether, $\#(W'\bks F)<\#(W\bks F)$,  and we
reach the final conclusion that
$$\#((Z\sqcup W')\bks F)<
\#((Z\sqcup W)\bks F)=m(F)-1.$$

Secondly, we assume $F\in\CF(X)\bks \bfF$, but still
that $S_k\subset F$.
Let $j\ge 1$ be the minimal index $i$ for which $S_i\subset F$. Define:
$$m_1\eqbd \#\{w\in W\bks w_j: w\in F \;\; {\rm and} \;\, w'\not \in F\}, \hbox{ and }
m_2\eqbd \#(\sqcup_{i\not=j}(X_i\bks F)).  $$
Note that (since $w_j'\in F$)
$\#((Z\sqcup W')\bks F)\le m(F)+m_1-m_2-\#((X_j\sqcup w_j)\bks F)$.
Note further that for $i\not=j$, if
$w_i'\not\in F$, while $w_i\in F$, then, since
$w_i-w_i'\in \spam X_i$, we have that $\#(X_i\bks F)>0$.
Thus, $m_1\le m_2$.
In addition, for $i=j$,
$$w_j-w'_j\in\spam X_j.$$
We know {\it a priori\/} that $S_j\oplus \spam X_j=S_{j-1}$.
Since $S_j\subset F$, while $S_{j-1}\not\subset F$,
we must have that $X_j\bks F\not=\emptyset$. But, $w_j'\in F$,
hence $\#((X_j\sqcup w_j)\bks F)\ge 2$. Thus,
$$\#((Z\sqcup W')\bks F)\le m(F)+m_1-m_2-\#((X_j\sqcup w_j)\bks F)<m(F)-1.$$

Lastly, assume that $S'\eqbd S_k\cap F\not= S_k$.  We define now,
similarly,
$$m_1\eqbd \#\{w\in W: w\in F\wedge w'\not \in F\}, \hbox{ and }
m_2\eqbd \#(\sqcup_{i=1}^k(X_i\bks F)).  $$
Then, as before, $m_1\le m_2$. Hence,
$\#((Z\sqcup W')\bks F)\le m(F)-\#U$, with
$U\eqbd (Y\cap S_k)\bks S'$. Note that
all the vectors of $Y\bks U$ lie in the rank deficient
set $(Y\cap S')\sqcup (Y\bks S_k)$), hence $U$ is not empty.
If $\#U\ge 2$, or if $m_1<m_2$, we are done since it follows that
$\#((Z\sqcup W')\bks F)\le m(F)-2$. However, if $U$ is a singleton and
$m_1=m_2$,
our analysis only shows that $\#((Z\sqcup W')\bks F)\le m(F)-1$. That means
that, for this case, we either need to furnish a finer estimate, or show that
$I\subset F$.  We prove the latter.  The argument below uses the
following approach: after realizing that the singleton $U$ lies in the
basis $B$, we define $F':=\spam(B\bks U)$, and conclude that
$F'$ must contain $I$. We then invoke the condition
$m_1=m_2$ in order to obtain a spanning set for $F$ by removing
from $F'\cap Y$ all the vectors in $F'\cap X_{\bfF}$ and adding instead vectors
from $W\subset I$. In this way, we guarantee that $I\subset F$, too.

Here are the details.
Let $F'$ be the hyperplane spanned by  $X\bks (X(B)\sqcup U)$.
This hyperplane is spanned by elements of $B$. Moreover,
$(X\bks X(B))\bks F'=U$, and $U$ is a singleton. At the same time,
$F'$ is not listed in $\bfF$ (since $U\in S_k$ and $S_k$ is disjoint
of $X_{\bfF}$).
Then, necessarily, $I\subset F'$, hence also $W\subset F'$.

Next, let $J\subset \{1,\ldots,k\}$ be defined by
$$j\in J \iff (w_j\in F\hbox{ and } w'_j\not\in F).$$
The equality $m_1=m_2$ implies that $X_j\bks F$ is a singleton
$x_j$ for $j\in J$ and is empty otherwise. Now, we know that
$Y':=Y\bks U$ spans $F'$. $Y'':=Y'\bks \{x_j:\ j\in J\}$ is a
subset of $F\cap F'$ of rank $n-\#J-1$. Since $W\subset F$, we
have that $Y''':=Y''\sqcup \{w_j: j\in J\}\subset F$. Since each
$w_j$, $j\in J$, is independent of $(Y\bks x_j)\sqcup\{w_i: i>j\}$,
we conclude that $\rank Y'=\rank Y'''$, hence\footnote{If a set
$A$ in a vector space $V$ contains a subspace $V'$ in its span,
and if $A'=v'\sqcup (A\bks a)$ for some $a\in A$ and $v'\in V'$,
then either $V'\subset \spam A$, or else $\rank A'<\rank A$.}
 $I\subset \spam Y'''$. Thus, $I\subset F$, and our proof is complete.
\end{proof}

We now consider the Hilbert series of $\cpm{X,I}$, i.e.,
$$h_{-,I}(j):=\dim (\cpm{X,I}\cap \Pi_j^0).$$
 In general, it is not
true that the polynomials $Q_B:=p_{X(B)}, B\in \B_-(X,I),$ form a
basis for $\cpm{X,I}$. However, they {\em can } be used for
computing $h_{-,I}(j)$. In fact, we have
$$
h_{-,I}(j)=\#\{B\in \B_-(X,I):\val(B)=\deg Q_B=j\}.
$$
We observe this fact from the proof of Lemma \ref{lem:sum}:
Every $Q_B$ there was proved to be writable as
$$
Q_B=\tilde{Q}_B+f_B
$$
with  $\tilde{Q}_B\in \cpm{X,I}$ and $f_B\in \cjm{X,I}$.
The fact $\tilde{Q}_B, B\in \B_-(X,I),$ are independent follows directly
from the independence of $Q_B, B\in \B_-(X,I)$, and the fact that
the sum $\cjm{X,I}+\spam\{Q_B: B\in \B_-(X,I)\}$ is direct (from
Part III of the proof of Theorems \ref{th:minus1} and
\ref{th:minus2}, and the fact  $\dim\ker \cjm{X,I}=\#\B_-(X,I)$),
which implies that $\{\tilde{Q}_B: B\in \B_-(X,I)\}$ is a basis
for $\cpm{X,I}$. Note that each $\tilde{Q}_B$ is obtained by
replacing some of the factors $p_\omega, \omega\in X$ of $Q_B$, by
polynomials $p_{\omega'}, \omega'\in \R^n\setminus 0$. Thus, $\deg
\tilde{Q}_B=\deg Q_B=\val(B)$, hence we may indeed compute
$h_{-,I}(j)$ via the polynomials $Q_B, B\in \B_-(X,I)$.

\begin{remark}
We note that,  if $\# I\leq 2 $, then
\begin{equation}\label{eq:cpmbuil}
\cpm{X,I}=\cpm{X}+\spam\{Q_B: B\in \B_-(X,I)\setminus \B_-(X)\}.
\end{equation}
In general, however, (\ref{eq:cpmbuil}) is not
valid for $ \# I\geq 3$.
\end{remark}

%%%%%%%%%%%%%%%%%%%%%%%%%%%%%  BIBLIOGRAPHY %%%%%%%%%%%%%%%%%%%%%%%%%%%%%%%%%%%%

\bibliographystyle{amsplain}

\end{document}